\newtheorem{theorem}{Theorem}
\newtheorem{proposition}{Proposition}
\newtheorem{remark}{Remark}
\newtheorem{corollary}{Corollary}
\newtheorem{lemma}{Lemma}
\begin{document}

	\title[sub-Finsler Cartan group]{Extremals of a left-invariant sub-Finsler quasimetric on the Cartan group}
	\author{V.~N.~Berestovskii, I.~A.~Zubareva}
	\thanks{The work is supported by Mathematical Center in Akademgorodok under agreement No. 075-15-2019-1613 with 
		the	Ministry of Science and Higher Education of the Russian Federation}
	\address{Sobolev Institute of Mathematics, \newline
		Acad. Koptyug avenue, 4, Novosibirsk, 630090, Russia}
	\email{vberestov@inbox.ru}
	\address{Sobolev Institute of Mathematics,\newline
		Pevtsova str., 13, Omsk, 644099, Russia}
	\email{i\_gribanova@mail.ru}	
	\begin{abstract}
		Using the Pontryagin Maximum Principle for the time-optimal problem in coordinates of the first kind, we find extremals of abitrary left--invariant sub--Finsler quasimetric on the Cartan group defined by a distribution of rank two.
		
		\vspace{2mm}
		\noindent {\it Keywords and phrases:} (ab)normal extremal, extremal, left--invariant sub--Finsler quasimetric, 
		optimal control, polar curve, Pontryagin Maximum Principle.
		
		\noindent {\it MSC2010:} 49J15, 49K15, 53C17.
	\end{abstract}
	\maketitle
	
	\section*{Introduction}
	
	In \cite{Ber1}, it is indicated that the shortest arcs of any left-invariant (sub-)Finsler metric $d$ on a Lie group $G$ are
	solutions of a left-invariant time-optimal problem with the closed unit ball $U$ of some {\it arbitrary} norm $F$ on a
	subspace $\mathfrak{p}$ of the Lie algebra $(\mathfrak{g},[\cdot,\cdot])$ of the Lie group $G$ as a control region. 
	In addition, the subspace $\mathfrak{p}$ generates $\mathfrak{g}$. These statements are also valid for (sub-)Finsler quasimetrics and the corresponding quasinorms. We explain that quasimetric have all properties of metric, except possibly the symmetry property $d(p,q)=d(q,p).$ Moreover, $U$ is an arbitrary convex figure in $\mathfrak{p}$ with $0$ interior to $U$, perhaps $U\neq -U$.
	The Pontryagin Maximum Principle  \cite{PBGM} gives the necessary conditions for
	optimal trajectories of the problem;  the curves, satisfying these conditions, are called {\it extremals}. 
	Apparently, for the first time the shortest arcs of any left-invariant sub-Finsler metric on Lie group have been found in paper \cite{Ber2} in the case of arbitrary sub-Finsler metric $d$ on the Heisenberg group $H$.
	
	In this paper we find extremals of arbitrary left-invariant sub-Finsler quasimetric on the Cartan group, defined by a subspace  $\mathfrak{p}$ of rank two; every extremal is normal for corresponding control. In papers \cite{S} by Sachkov and \cite{ALDS}, \cite{ALDS1} by Ardentov, Le Donne, Sachkov, they considered special cases in other coordinates.
	
	We apply here classical methods and results from the monograph \cite{PBGM}. Paper \cite{BerZub} uses some new search methods for normal extremals of left-invariant (sub-)Finsler and (sub-)Riemannian metrics.
	
	The authors thank S.~K.~Vodopyanov for useful discussions. 
	
	\section{The Campbell-Hausdorff formula for the Cartan group}
	
	Let $X$, $Y$, $Z$, $V$, $W$ be a basis of the five-dimensional Cartan algebra $\mathfrak{g}$ such that
	\begin{equation}
	\label{a1}
	[X,Y]=Z,\quad [X,Z]=V,\quad [Y,Z]=W,
	\end{equation}
	all other Lie brackets are equal to zero. 
	Thus $\mathfrak{g}$ is a nilpotent Lie algebra with two generators $X$, $Y$.
	Therefore, as it is known, there exists a unique up to isomorphism connected simply connected nilpotent Lie group
	$G$ with the Lie algebra $\mathfrak{g}$, the Cartan group, and the exponential mapping $\exp: \mathfrak{g}\rightarrow G$ is a diffeomorphism. This diffeomomorphism and the Cartesian coordinates $x,\,y,\,z,\,v,w$ in  $\mathfrak{g}$  with the basis $X$, $Y$, $Z$, $V$, $W$ defines coordinates of the first kind on $G$ and thus a diffeomorphism $G\cong\mathbb{R}^5$.
	
	\begin{proposition}
		\label{product}
		In coordinates of the first kind, the multiplication on the Cartan group $G\cong\mathbb{R}^5$ is given by 
		the following rule	
		\begin{equation}
		\label{a2}
		\left(\begin{array}{c}
		x_1 \\
		y_1 \\
		z_1 \\
		v_1 \\
		w_1
		\end{array}\right)\times\left(\begin{array}{c}
		x_2 \\
		y_2 \\
		z_2 \\
		v_2 \\
		w_2
		\end{array}\right)=\left(\begin{array}{c}
		x_1+x_2 \\
		y_1+y_2 \\
		z_1+z_2+\frac{1}{2}(x_1y_2-x_2y_1) \\
		v_1+v_2+\frac{1}{2}(x_1z_2-x_2z_1)+\frac{1}{12}(x_1^2y_2-x_1x_2y_1-x_1x_2y_2+x_2^2y_1) \\
		w_1+w_2+\frac{1}{2}(y_1z_2-y_2z_1)+\frac{1}{12}(x_1y_1y_2+x_2y_1y_2-x_2y_1^2-x_1y_2^2)
		\end{array}\right).
		\end{equation}
	\end{proposition}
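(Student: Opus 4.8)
The plan is to compute the product using the Campbell–Hausdorff (BCH) formula applied to the exponential coordinates. Since $G$ is nilpotent of step three (the lower central series terminates after three steps because all brackets of length $\geq 4$ vanish by the relations in (\ref{a1})), the BCH series truncates after the terms involving triple brackets. Concretely, for $A,B\in\mathfrak{g}$ the formula reads
\begin{equation*}
\exp(A)\exp(B)=\exp\Bigl(A+B+\tfrac{1}{2}[A,B]+\tfrac{1}{12}\bigl([A,[A,B]]-[B,[A,B]]\bigr)\Bigr),
\end{equation*}
with all higher-order terms vanishing. This is exactly the reason the coefficient $\tfrac{1}{12}$ appears in the $v$- and $w$-components of (\ref{a2}), while the $z$-component carries only the first-order bracket correction $\tfrac{1}{2}[A,B]$.

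First I would set $A=x_1X+y_1Y+z_1Z+v_1V+w_1W$ and $B=x_2X+y_2Y+z_2Z+v_2V+w_2W$, and identify the point $\exp(A)\exp(B)$ with its exponential coordinates by reading off the coefficients of the five basis vectors in the BCH exponent. The $X$- and $Y$-components are immediate: the brackets $[A,B]$, $[A,[A,B]]$, $[B,[A,B]]$ all land in the span of $Z,V,W$ (since $Z,V,W$ are central enough that no bracket produces $X$ or $Y$), so the first two coordinates are simply $x_1+x_2$ and $y_1+y_2$.

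Next I would compute $[A,B]$ using bilinearity and (\ref{a1}). Only the pairs $(X,Y)$, $(X,Z)$, $(Y,Z)$ contribute, giving
\begin{equation*}
[A,B]=(x_1y_2-x_2y_1)Z+(x_1z_2-x_2z_1)V+(y_1z_2-y_2z_1)W.
\end{equation*}
Multiplying by $\tfrac12$ yields the $\tfrac12$-terms in the $z,v,w$ rows. The remaining task is the triple-bracket term. Writing $C=[A,B]$ as above, I would compute $[A,C]$ and $[B,C]$; since $C\in\mathrm{span}(Z,V,W)$ and $V,W$ are central while $[X,Z]=V$, $[Y,Z]=W$, only the $Z$-part of $C$ survives bracketing, so $[A,C]=(x_1y_2-x_2y_1)(x_1V+y_1W)$ and similarly $[B,C]=(x_1y_2-x_2y_1)(x_2V+y_2W)$. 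Forming $\tfrac{1}{12}([A,C]-[B,C])$ and collecting the $V$- and $W$-coefficients should reproduce exactly the degree-three polynomials displayed in the $v_1+v_2+\cdots$ and $w_1+w_2+\cdots$ rows of (\ref{a2}).

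I expect the main obstacle to be purely bookkeeping rather than conceptual: one must carefully expand the products $(x_1y_2-x_2y_1)(x_1-x_2)$ and $(x_1y_2-x_2y_1)(y_1-y_2)$ and match the resulting four monomials in each of the $V$ and $W$ coefficients against the stated expressions, while keeping signs consistent. A useful sanity check at the end is to verify associativity on a few coordinates, or equivalently to confirm that the inverse of $(x,y,z,v,w)$ is $(-x,-y,-z,-v,-w)$, which must hold because the BCH exponent is odd under $A\mapsto-A$; this confirms that the cubic corrections have been assembled correctly.
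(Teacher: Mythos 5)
Your proposal is correct and follows essentially the same route as the paper: set $A_i=x_iX+y_iY+z_iZ+v_iV+w_iW$, compute $[A_1,A_2]$ and the two triple brackets from (\ref{a1}), and read off the coordinates from the step-three truncation of the Campbell--Hausdorff formula. The only cosmetic difference is that you write the cubic term as $\tfrac{1}{12}([A,[A,B]]-[B,[A,B]])$ where the paper uses the equivalent $\tfrac{1}{12}[A_1,[A_1,A_2]]+\tfrac{1}{12}[A_2,[A_2,A_1]]$, and the resulting monomials match (\ref{a2}) exactly.
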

	
	\begin{proof}
		Set $A_i=x_iX+y_iY+z_iZ+v_iV+w_iW$, $i=1,2$.
		Using (\ref{a1}), we consequently obtain
		$$[A_1,A_2]=(x_1y_2-x_2y_1)Z+(x_1z_2-x_2z_1)V+(y_1z_2-y_2z_1)W;$$
		$$[A_1,[A_1,A_2]]=x_1(x_1y_2-x_2y_1)V+y_1(x_1y_2-x_2y_1)W;$$
		$$[A_2,[A_2,A_1]]=[[A_1,A_2],A_2]=-x_2(x_1y_2-x_2y_1)V-y_2(x_1y_2-x_2y_1)W.$$
		
		Since the Lie algebra $\mathfrak{g}$ is of step three, then it is valid the following Campbell-Hausdorff formula
		(see \cite{Post}):
		$$\ln\left(\exp(A_1)\exp(A_2)\right)=A_1+A_2+\frac{1}{2}[A_1,A_2]+\frac{1}{12}[A_1,[A_1,A_2]]+\frac{1}{12}[A_2,[A_2,A_1]].$$
		Therefore
		$$\ln\left(\exp(A_1)\exp(A_2)\right)=(x_1+x_2)X+ (y_1+y_2)Y+\left(z_1+z_2+\frac{1}{2}(x_1y_2-x_2y_1)\right)Z+$$
		$$\left(v_1+v_2+\frac{1}{2}(x_1z_2-x_2z_1)+\frac{1}{12}(x_1^2y_2-x_1x_2y_1-x_1x_2y_2+x_2^2y_1)\right)V+$$
		$$\left(w_1+w_2+\frac{1}{2}(y_1z_2-y_2z_1)+\frac{1}{12}(x_1y_1y_2+x_2y_1y_2-x_2y_1^2-x_1y_2^2)\right)W.$$
		The last equality gives (\ref{a2}).
	\end{proof}
	
	It follows from the applied method to introduce coordinates of the first kind and formulas (\ref{a2}) that in these coordinates,
	the chosen basis of the Lie algebra $\mathfrak{g}$ is realized as left-invariant vector fields on the Lie group $G$ of the form
	\begin{equation}
	\label{XY}
	X= \frac{\partial}{\partial x} -\frac{y}{2}\frac{\partial}{\partial z} - \frac{z}{2}\frac{\partial}{\partial v} -\frac{xy}{12}\frac{\partial}{\partial v}-\frac{y^2}{12}\frac{\partial}{\partial w},\,\,
	Y= \frac{\partial}{\partial y} + \frac{x}{2}\frac{\partial}{\partial z} + \frac{x^2}{12}\frac{\partial}{\partial v}
	+\left(\frac{xy}{12}-\frac{z}{2}\right)\frac{\partial}{\partial w},
	\end{equation}
	\begin{equation}
	\label{ZVW}
	Z= \frac{\partial}{\partial z} + \frac{x}{2}\frac{\partial}{\partial v} +\frac{y}{2}\frac{\partial}{\partial w},\,\,
	V = \frac{\partial}{\partial v},\,\,W = \frac{\partial}{\partial w}.
	\end{equation}

	\section{Left-invariant sub-Finsler quasimetric and the optimal control on the Cartan group}
	
	In \cite{Ber1}, it is said that  the shortest arcs of a left-invariant sub-Finsler metric $d$ on arbitrary connected Lie group	$G$
	defined by a left-invariant bracket generating distribution $D$ and a norm  $F$ on $D(e)$ coincide with the time-optimal solutions of
	the following control system 
	\begin{equation}
	\label{a3}
	\dot{g}(t)=dl_{g(t)}(u(t)),\quad u(t)\in U,
	\end{equation}
	with measurable controls $u=u(t)$. Here $l_g(h)=gh$,  the control region is the unit ball  $$U=\{u\in D(e)\,|F(u)\leq 1\}.$$
	This statement is also true in the case when $d$ is a quasimetric (respectively, $F$ is a quasinorm on $D(e)$).
	
	Therein the Pontryagin Maximum Principle \cite{PBGM} for (local) time optimal control $u(t)$ and corresponding trajectory  $g(t),$ $t\in\mathbb{R}$ implies the existence of a non-vanishing absolutely continuous vector-function
	$\psi(t)\in T^{\ast}_{g(t)}G$ such that for almost all $t\in\mathbb{R}$ the function 
	$\mathcal{H}(g(t);\psi(t);u)=\psi(t)(dl_{g(t)}(u))$ of the variable $u\in U$ attains a maximum at the point $u(t)$: 
	\begin{equation}
	\label{m}
	M(t)=\psi(t)(dl_{g(t)}(u(t)))=\max\limits_{u\in U}\psi(t)(dl_{g(t)}(u)).
	\end{equation}
	In addition, the function $M(t),$  $t\in\mathbb{R},$ is constant and non-negative, $M(t)\equiv M\geq 0$.
	In case when $M=0$ (respectively, $M>0$) the corresponding  {\it extremal}, i.e. the curve, satisfying the Pontryagin Maximum Principle, is called {\it abnormal} (respectively, {\it normal}).
	
	If $x=(x^1,\dots, x^n)$ is a global coordinate system on $G,$ 
	$$x(t)=(x^1(t),\dots x^n(t)):=(x^1(g(t)),\dots x^n(g(t))),$$
	$$\psi_j=\psi_j(t)=\psi(t)\left(\frac{\partial}{\partial x^j}\right)(x(t)), j=1,\dots,n,\quad \psi(t):=(\psi_1(t),\dots,\psi_n(t)),$$ 
	then according to \cite{PBGM}, the pair $(g(t),\psi(t))$ satisfies {\it the Hamiltonian system} in a symbolic notation
	\begin{equation}
	\label{ham}
	\dot{x}(t)=\frac{\partial\mathcal{H}}{\partial\psi}(x(t),\psi(t),u(t)),\quad \dot{\psi}(t)=
	-\frac{\partial\mathcal{H}}{\partial x}(x(t),\psi(t),u(t)).
	\end{equation}
	
	It follows from (\ref{a1}) that the left-invariant distribution $D$ on $G$ with the basis $X, Y$ for $D(e)$ is bracket generating.
	Let $F$ be an arbitrary quasinorm on $D(e)$. Then the pair  $(D(e),F)$ defines a left-invariant sub-Finsler quasimetric  
	$d$ on $G$; therein $u_1X(e)+u_2Y(e)$ is identified with $u=(u_1,u_2),$ where $u_i\in\mathbb{R},$ $i=1,2.$
	
	Let $\psi_k,$ $k=1,\dots,5,$ be covector components of $\psi=\psi(t)$ relative to the coordinate system $(x,y,z,v,w),$  i.e.
	\begin{equation}
	\label{psic}
	\psi_1=\psi\left(\frac{\partial}{\partial x}\right),\,\,\psi_2=\psi\left(\frac{\partial}{\partial y}\right),\,\,
	\psi_3=\psi\left(\frac{\partial}{\partial z}\right),\,\,\psi_4=\psi\left(\frac{\partial}{\partial v}\right),\,\,
	\psi_5=\psi\left(\frac{\partial}{\partial w}\right),
	\end{equation}
	\begin{equation}
	\label{hc}
	h_1=\psi(X),\quad h_2=\psi(Y),\quad h_3=\psi(Z),\quad h_4=\psi(V),\quad h_5=\psi(W).
	\end{equation}
	
	Using (\ref{XY}), (\ref{ZVW}), (\ref{psic}), (\ref{hc}), we obtain 
	\begin{equation}
	\label{a5}
	h_1=\psi_1-\frac{1}{2}\psi_3y-\frac{1}{12}\psi_4xy-\frac{1}{12}\psi_5y^2-\frac{1}{2}\psi_4z,\quad h_2=\psi_2+\frac{1}{2}\psi_3 x+\frac{1}{12}\psi_4 x^2+\frac{1}{12}\psi_5xy-\frac{1}{2}\psi_5z,
	\end{equation}
	\begin{equation}
	\label{h34}
	h_3= \psi_3 + \psi_4\frac{x}{2}+\psi_5\frac{y}{2},\quad h_4=\psi_4,\quad h_5=\psi_5.
	\end{equation}
	
	Then the function $\mathcal{H}(x,y,z,v,w;\psi_1,\psi_2,\psi_3,\psi_4,\psi_5;u_1,u_2)$ can be written as
	\begin{equation}
	\label{ph}
	\mathcal{H}=\psi(u_1X+u_2Y)=u_1\psi(X)+u_2\psi(Y)=h_1u_1+h_2u_2.
	\end{equation}
	
	With regard to the first equality in (\ref{ham}), (\ref{ph}) and (\ref{a5}) system (\ref{a3}) takes a form
	\begin{equation}
	\label{a4}
	\dot{x}(t)=u_1,\quad\dot{y}(t)=u_2,\quad\dot{z}(t)=\frac{1}{2}(xu_2-yu_1),
	\end{equation}
	\begin{equation}
	\label{a4_1}
	\dot{v}(t)=-\frac{1}{2}\left(z+\frac{1}{6}xy\right)u_1+\frac{1}{12}x^2u_2,\quad
	\dot{w}(t)=-\frac{1}{12}y^2u_1-\frac{1}{2}\left(z-\frac{1}{6}xy\right)u_2,
	\end{equation}
	where $(u_1,u_2)=(u_1(t),u_2(t))\in U$. 
	
	{\it In consequence of left-invariance of the metric $d$ we can assume that the trajectories initiate at the unit $e\in G$, i.e. $x(0)=y(0)=z(0)=v(0)=w(0)=0$.}
	
	The control $u=u(t)=(u_1(t),u_2(t))\in U,$ $t\in \mathbb{R},$ defined by the Pontryagin Maximum Principle is bounded and measurable  \cite{PBGM},
	therefore integrable. Then the functions $x(t),$ $y(t),$ $t\in\mathbb{R},$ defined by the first two equations in (\ref{a4}) are Lipschitz, the product of any finite number	of these functions is Lipschitz, and its derivative is bounded and measurable on each compact segment of $\mathbb{R}$. So this derivative can be computed by the usual differentiation rule of a product from differential calculus for functions of one variable. Therefore, the last  equation of the system (\ref{a4}) and equations of (\ref{a4_1}) can be integrated by parts, using the first two equations in (\ref{a4}) (see ss. 2.9.21, 2.9.24 in \cite{Fed}). By $x(0)=y(0)=z(0)=v(0)=w(0)=0$
	we get successively
	\begin{equation}
	\label{aa4}
	z(t)=-\frac{1}{2}x(t)y(t)+\int\limits_0^t x(\tau)u_2(\tau)d\tau,
	\end{equation}
	\begin{equation}
	\label{aa5}
	v(t)=\frac{1}{12}x^2(t)y(t)-\frac{1}{2}x(t)\int\limits_0^t x(\tau)u_2(\tau)d\tau+\frac{1}{2}\int\limits_0^tx^2(\tau)u_2(\tau)d\tau,
	\end{equation}
	\begin{equation}
	\label{aa6}
	w(t)=-\frac{1}{12}x(t)y^2(t)-\frac{1}{2}y(t)\int\limits_0^t x(\tau)u_2(\tau)d\tau+\int\limits_0^tx(\tau)y(\tau)u_2(\tau)d\tau.
	\end{equation}
	
	By (\ref{ph}) and (\ref{a5}), the second equality in (\ref{ham}) defines the following ODE system 
	conjugate to (\ref{a4}), (\ref{a4_1}), for the absolutely continuous vector function $\psi=\psi(t)$: 
	\begin{equation}
	\label{a6}
	\left\{\begin{array}{l}
	\dot{\psi_1}=\frac{1}{12}\psi_4yu_1-\left(\frac{1}{2}\psi_3+\frac{1}{6}\psi_4x+\frac{1}{12}
	\psi_5y\right)u_2,\\
	\dot{\psi_2}=\left(\frac{1}{2}\psi_3+\frac{1}{12}\psi_4x+\frac{1}{6}\psi_5y\right)u_1-\frac{1}{12}\psi_5xu_2,\\
	\dot{\psi_3}=\frac{1}{2}\psi_4u_1+\frac{1}{2}\psi_5u_2,\\
	\quad\dot{\psi_4}=0,\\
	\quad\dot{\psi_5}=0.
	\end{array}\right.
	\end{equation}
	Assign an arbitrary set of initial data $\psi_i(0)=\varphi_i$, $i=1,\dots,5$, of the system (\ref{a6}). It follows from (\ref{a6}), (\ref{a4}) and the initial condition  $x(0)=y(0)=0$ that
	\begin{equation}
	\label{psi3}
	\psi_5\equiv\varphi_5,\quad\psi_4\equiv\varphi_4,\quad\psi_3=\varphi_3+\frac{1}{2}\varphi_4x+\frac{1}{2}\varphi_5y.
	\end{equation}
	Notice that $\left(\frac{1}{2}xy+z\right)^{\cdot}=xu_2$, 
	$\left(\frac{1}{2}xy-z\right)^{\cdot}=yu_1$ on the ground of (\ref{a4}). With regard
	to (\ref{psi3}) the first and the second equations in (\ref{a6}) take a form
	$$\dot{\psi}_1=\frac{1}{12}\varphi_4\left(\frac{1}{2}xy-z\right)^{\cdot}-\frac{1}{2}\varphi_3\dot{y}-\frac{5}{12}\varphi_4\left(\frac{1}{2}xy+z\right)^{\cdot}-\frac{1}{3}\varphi_5y\dot{y},$$
	$$\dot{\psi}_2=\frac{5}{12}\varphi_5\left(\frac{1}{2}xy-z\right)^{\cdot}+
	\frac{1}{2}\varphi_3\dot{x}-
	\frac{1}{12}\varphi_5\left(\frac{1}{2}xy+z\right)^{\cdot}+\frac{1}{3}\varphi_4x\dot{x}.$$
	Therefore, by the initial data of systems (\ref{a4}) and (\ref{a6}), we get
	\begin{equation}
	\label{psi12}
	\psi_1=\varphi_1-\frac{1}{2}\varphi_3y-\frac{1}{6}\varphi_5y^2-\frac{1}{6}\varphi_4xy-
	\frac{1}{2}\varphi_4z,\,\,
	\psi_2=\varphi_2+\frac{1}{2}\varphi_3x+\frac{1}{6}\varphi_4x^2+\frac{1}{6}\varphi_5xy-\frac{1}{2}\varphi_5z.
	\end{equation}
	Inserting (\ref{psi3}) and (\ref{psi12}) into (\ref{a5}), (\ref{h34}), we find
	\begin{equation}
	\label{h12}
	h_1=\varphi_1-\left(\varphi_3+\frac{1}{2}\varphi_4x+\frac{1}{2}\varphi_5y\right)y-\varphi_4z,\quad h_2=\varphi_2+\left(\varphi_3 +\frac{1}{2}\varphi_4 x+\frac{1}{2}\varphi_5y\right)x-\varphi_5z,
	\end{equation}
	
	\begin{equation}
	\label{h345}
	h_3=\varphi_3 + \varphi_4x + \varphi_5y,\quad h_4=\varphi_4,\quad h_5=\varphi_5.
	\end{equation}
		
	From (\ref{h12}) and (\ref{h345}) we obtain an integral of the Hamiltonian system (\ref{a4}) -- (\ref{a4_1}), (\ref{a6}):
		\begin{equation}
		\label{c12}
		\mathcal{E}=\frac{h_3^2}{2}+h_1h_5-h_2h_4\equiv\frac{\varphi_3^2}{2}+\varphi_1\varphi_5-\varphi_2\varphi_4.
		\end{equation}
Thus the functions $\mathcal{H}(t)=M(t)$ and three the so-called {\it Casimir functions} $h_4=\varphi_4,$ $h_5=\varphi_5,$ and $\mathcal{E}$ are integrals of this Hamiltonian system.
    
Now, using (\ref{a4}), (\ref{h12}) and (\ref{h345}), we compute
\begin{equation}
\label{a7}
\dot{h}_1=-h_3u_2,\quad \dot{h}_2=h_3u_1.
\end{equation}

For an extremal $\left(x(t),y(t),z(t),v(t),w(t)\right),$ a bounded measurable control $u(t)$ and 
a non-vanishing absolutely continuous vector-function $\psi(t)$,  the function \linebreak
$\mathcal{H}(x(t),y(t),z(t),v(t),w(t);\psi_1(t),\psi_2(t),\psi_3(t),\psi_4(t),\psi_5(t);u_1,u_2)$ of 
$u\in U$ attains the maximum at the point $u=u(t)$:
\begin{equation}
\label{m}
M(t)=h_1(t)u_1(t)+h_2(t)u_2(t)=\max\limits_{u\in U}(h_1(t)u_1+h_2(t)u_2)\equiv M\geq 0.
\end{equation}

Relations (\ref{a4}), (\ref{h12}) and (\ref{m}) imply that under multiplication of functions $\psi_i(t)$, $i=1,\dots,5$, by a positive constant $k$ the trajectory $(x(t),y(t),z(t),v(t),w(t))$ does not change, while $M$ is multipled by $k$. Therefore {\it in case when $M>0$ we shall assume that} $M=1$. 
{\it Further in this section we consider this case.}

It follows from (\ref{m}) that $(h_1(t),h_2(t))$ in (\ref{h12}) and $(\varphi_1,\varphi_2)=(h_1(0),h_2(0))$ lie
on the boundary $\partial U^{\ast}$ of the polar figure $U^{\ast}=\{h\,|F_{U}(h)\leq 1\}$ to
$U$, where $F_{U}$ is a quasinorm on $H=\{h\},$ is equal to the support Minkowski function of the body $U$:
$$F_U(h)=\max\limits_{u\in U}h\cdot u.$$
In addition, $(H,F_U)$ is the conjugate quasinormed vector space to $(D(e),F)$ and  $(U^{\ast})^{\ast}=U$ (see Theorem 14.5 in \cite{Rock}).
Moreover, using (\ref{a7}) and (\ref{m}), we get
\begin{equation}
\label{area}
h_1(t)\dot{h}_2(t)-\dot{h}_1(t)h_2(t)=h_3(t)(h_1(t)u_1(t)+h_2(t)u_2(t))=h_3(t).
\end{equation}

Let $r=r(\theta)$, $\theta\in\mathbb{R}$, be a polar equation of the curve $F_U(x,y)=1$.  At every point $\theta\in \mathbb{R}$
there exist one-sided derivatives of $r=r(\theta)$ (and with possible exclusion of no more than countable number of values
$\theta$ there exists the usual derivative $r'(\theta)$). For simplicity {\it we shall denote every value between these derivatives by} $r'(\theta)$. Then 
\begin{equation}
\label{h}
h_1(t)=h_1(\theta)=r(\theta)\cos\theta,\quad h_2(t)=h_2(\theta)=r(\theta)\sin\theta,\quad \theta=\theta(t),
\end{equation}   
\begin{equation}
\label{hder}  
h'_1(\theta)=-(r(\theta)\sin\theta-r^{\prime}(\theta)\cos\theta),\quad h'_2(\theta)=(r^{\prime}(\theta)\sin\theta+r(\theta)\cos\theta).
\end{equation}

Independently on the existence of usual derivative (\ref{hder}),  (\ref{area}) implies the existence of usual derivative 
for the doubled oriented area   
$$\sigma(t)=2S(\theta(t))=\int_0^{\theta(t)}r^2(\theta)d\theta$$ 
of the sector, counted from $0.$ In addition, by (\ref{h34}) and (\ref{area}) 
\begin{equation}
\label{derst}
\dot{\sigma}(t)=\varphi_3+\varphi_4x(t)+\varphi_5y(t)=r^2(\theta(t))\dot{\theta}(t),\quad \dot{\theta}(t)=\frac{\dot{\sigma}(t)}{r^2(\theta(t))}.
\end{equation}
If we square the second equality in (\ref{derst}), we get by (\ref{h12})
$$r^4(\theta)\dot{\theta}^2=\varphi_3^2+\left(\varphi_3+\frac{1}{2}\varphi_4x+
\frac{1}{2}\varphi_5y\right)\left(2\varphi_4+2\varphi_5y\right)=$$
$$\varphi_3^2+2\varphi_4(h_2-\varphi_2)-2\varphi_5(h_1-\varphi_1),$$
\begin{equation}
\label{dt}
\dot{\theta}^2=\frac{\varphi_3^2+2\varphi_4(h_2-\varphi_2)-2\varphi_5(h_1-\varphi_1)}{r^4(\theta)}.
\end{equation}
On the ground of (\ref{c12}), (\ref{area}), and (\ref{derst}),
\begin{equation}
\label{pen}
\ddot{\sigma}(t)=\varphi_4u_1(t)+\varphi_5u_2(t),
\end{equation}
\begin{equation}
\label{energy}
\mathcal{E}=\mathcal{E}(t)=\frac{1}{2}(\dot{\sigma}(t))^2+h_1(t)h_5(t)-h_2(t)h_4(t)= 
\end{equation}
$$\frac{1}{2}(h_3(t))^2+h_1(t)h_5(t)-h_2(t)h_4(t)= \frac{\varphi_3^2}{2}+\varphi_1\varphi_5-\varphi_2\varphi_4.$$

\begin{remark}
(\ref{energy}) is equivalent to (\ref{dt}).	
\end{remark}

It follows from (\ref{a4}) and (\ref{a4_1}) that
\begin{equation}
\label{t1}
\left(3v+\frac{1}{2}xz\right)^{\cdot}=-\frac{3}{2}\dot{x}z+\frac{1}{2}x\dot{z}+\frac{1}{2}\dot{x}z+\frac{1}{2}x\dot{z}=x\dot{z}-\dot{x}z,
\end{equation}
\begin{equation}
\label{t2}
\left(3w+\frac{1}{2}yz\right)^{\cdot}=-\frac{3}{2}\dot{y}z+\frac{1}{2}y\dot{z}+
\frac{1}{2}\dot{y}z+\frac{1}{2}y\dot{z}=y\dot{z}-\dot{y}z,
\end{equation}
so on the base of  (\ref{a4}), (\ref{h12}), and (\ref{m})  we get, omitting for brevity the variable $t$,
$$h_1u_1+h_2u_2=\varphi_1\dot{x}+\varphi_2\dot{y}+2\varphi_3\dot{z}+\varphi_4(x\dot{z}-z\dot{x})+\varphi_5(y\dot{z}-z\dot{y})=$$
$$\left(\varphi_1x+\varphi_2y+2\varphi_3z+3\varphi_4v+3\varphi_5w+\frac{\varphi_4}{2}xz+
\frac{\varphi_5}{2}yz\right)^{\cdot}=1.$$
Taking into account of the initial data of systems (\ref{a4}) and (\ref{a4_1}), we obtain
\begin{equation}
\label{eq}
\varphi_1x(t)+\varphi_2y(t)+2\varphi_3z(t)+3\varphi_4v(t)+3\varphi_5w(t)+\frac{\varphi_4}{2}x(t)z(t)+
\frac{\varphi_5}{2}y(t)z(t)=t.
\end{equation}

\section{Search for sub-Finsler extremals}

\label{geod}

{\bf 1.} Let us consider an abnormal case. The following proposition is valid.

\begin{proposition}
	\label{ageod1}
	An abnormal extremal  $(x,y,z,v,w)(t)$, $t\in\mathbb{R}$, on the Cartan group starting at the unit is one  
	of the following one-parameter subgroups
	\begin{equation}
	\label{anorm1}
	x(t)\equiv 0,\quad y(t)=\frac{st}{F(0,s)},\quad s=\pm 1,\quad z(t)=v(t)=w(t)\equiv 0,
	\end{equation}
	\begin{equation}
	\label{anorm2}
	x(t)=\frac{st}{F(s,0)}\quad s=\pm 1, \quad y(t)=z(t)=v(t)=w(t)\equiv 0,
	\end{equation}
	\begin{equation}
	\label{anorm}
	x(t)=\frac{s\varphi_5t}{F(s\varphi_5,-s\varphi_4)},\,\,y(t)=\frac{-\varphi_4x(t)}{\varphi_5}, s=\pm 1,
	z(t)=v(t)=w(t)\equiv 0\neq \varphi_4\cdot\varphi_5,
	\end{equation}
	and is not strongly abnormal.
\end{proposition}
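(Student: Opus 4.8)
The plan is to run the Pontryagin Maximum Principle conditions ``backwards'': starting from the abnormality condition $M=0$, I would progressively force the covector and the trajectory into the three stated normal forms, and only at the end produce a normal covector realizing the same curve. First I would use that $0$ is interior to $U$, so the support function $F_U(h)=\max_{u\in U}h\cdot u$ vanishes only at the origin. Since the common value $M=F_U(h_1(t),h_2(t))$ equals $0$ in the abnormal case, this gives $h_1(t)\equiv h_2(t)\equiv 0$. Evaluating \eqref{h12} at $t=0$, where $x=y=z=0$, then yields $\varphi_1=\varphi_2=0$.

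Next, \eqref{a7} gives $h_3u_2\equiv 0$ and $h_3u_1\equiv 0$ almost everywhere. I would feed $h_1\equiv h_2\equiv 0$ and $\varphi_1=\varphi_2=0$ into the energy integral \eqref{energy} (equivalently \eqref{c12}) to obtain $h_3^2(t)\equiv\varphi_3^2$; since $h_3$ is continuous with $h_3(0)=\varphi_3$ by \eqref{h345}, this forces $h_3\equiv\varphi_3$. If $\varphi_3\neq 0$, then $h_3u\equiv 0$ makes $u\equiv 0$ and the trajectory degenerates to the constant curve at $e$, which I discard; hence $\varphi_3=0$ and $h_3=\varphi_4x+\varphi_5y\equiv 0$, so that also $\dot h_3=\varphi_4u_1+\varphi_5u_2\equiv 0$ by \eqref{pen} (or directly from \eqref{a4}). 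Non-triviality of $\psi$ forces $(\varphi_4,\varphi_5)\neq(0,0)$, since otherwise all $\varphi_i$ vanish and \eqref{psi3}, \eqref{psi12} give $\psi\equiv 0$.

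Then I would split into the three cases $\varphi_4\neq 0=\varphi_5$, $\varphi_4=0\neq\varphi_5$, and $\varphi_4\varphi_5\neq 0$. The relation $\varphi_4x+\varphi_5y\equiv 0$ confines $(x,y)$ to the $y$-axis, the $x$-axis, and the line $y=-(\varphi_4/\varphi_5)x$ respectively, while $\varphi_4u_1+\varphi_5u_2\equiv 0$ confines $u$ to the corresponding direction. Substituting these into the integral formulas \eqref{aa4}, \eqref{aa5}, \eqref{aa6} is a routine computation giving $z\equiv v\equiv w\equiv 0$ in every case; for instance, in the last case $\int_0^t x u_2\,d\tau=-\tfrac{\varphi_4}{2\varphi_5}x^2(t)$ cancels the term $-\tfrac12 xy$ in \eqref{aa4}. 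Adopting the arc-length parametrization $F(u)\equiv 1$ standard for geodesics, together with the now-fixed direction, yields the constant controls of \eqref{anorm1}, \eqref{anorm2}, \eqref{anorm}, that is, the stated one-parameter subgroups.

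Finally, to show these extremals are not strongly abnormal, I would exhibit for each a covector with $M=1$ realizing the same trajectory as a \emph{normal} extremal. Taking initial data $\varphi_3=\varphi_4=\varphi_5=0$ and $(\varphi_1,\varphi_2)=h^{0}$, a supporting functional of $U$ at the constant control $u_0\in\partial U$ normalized by $F_U(h^{0})=1$, keeps $h_3\equiv 0$, hence $h_1\equiv\varphi_1$, $h_2\equiv\varphi_2$ constant by \eqref{a7}, so the maximum condition holds for all $t$ with maximizer $u_0$ and $M=h^{0}\cdot u_0=F_U(h^{0})=1$. Thus the same line is simultaneously a normal extremal, consistent with the paper's theme that every extremal is normal for the corresponding control. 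I expect the main obstacle to lie in the step isolating $h_3\equiv 0$: one must simultaneously discard the degenerate constant solution and deduce $(\varphi_4,\varphi_5)\neq 0$ from non-triviality, and then, in the last paragraph, arrange the normal covector so that the maximum condition persists for \emph{all} $t$ with the \emph{same} control, which is exactly what upgrades an abnormal line to a genuine normal extremal.
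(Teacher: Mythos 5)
Your proposal is correct and follows essentially the same route as the paper: $M=0$ forces $h_1\equiv h_2\equiv 0$ and $\varphi_1=\varphi_2=0$, then $h_3\equiv 0$ gives $\varphi_3=0$ and $\varphi_4x+\varphi_5y\equiv 0$ with $(\varphi_4,\varphi_5)\neq(0,0)$ by non-triviality of $\psi$, after which the same trichotomy in $(\varphi_4,\varphi_5)$ yields $z\equiv v\equiv w\equiv 0$ and the three one-parameter subgroups, each finally realized as a normal extremal. The only noteworthy deviations are that you obtain $h_3\equiv\varphi_3$ via the Casimir integral $\mathcal{E}$ instead of directly from $h_3u_1=h_3u_2=0$ with a nowhere-vanishing control, and that in the last step you take $(\varphi_1,\varphi_2)$ to be a supporting functional of $U$ at the constant control normalized by $F_U(\varphi_1,\varphi_2)=1$ --- which is in fact the more robust choice for an arbitrary quasinorm, since the paper's explicit covectors proportional to the control direction attain the maximum in (\ref{m}) at that control only when the supporting line of $U$ there is orthogonal to it.
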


\begin{proof}
Assume that $M=0$. Then we obtain from the maximum condition that $h_1(t)=h_2(t)\equiv 0$ and $\varphi_1=\varphi_2=0$. Since $u_1(t)$ and $u_2(t)$ could not simultaneously vanish at any  $t\in\mathbb{R}$, then $\varphi_3+\varphi_4x(t)+\varphi_5y(t)\equiv 0$ on the base of (\ref{h345}) and (\ref{a7}). 
This implies that $\varphi_3=0$ and $\varphi_4x(t)+\varphi_5y(t)\equiv 0$  because $x(0)=y(0)=0$.  Hence in consequence of (\ref{psi3}) and (\ref{psi12})
we get $\varphi_4\neq 0$ or/and $\varphi_5\neq 0$ because $\psi(t)$ does not vanish. It follows from this and (\ref{h12}) that $z(t)\equiv 0$.

Let $\varphi_4\neq 0$, $\varphi_5=0$. Then $x(t)\equiv 0$ and  $u_1(t)\equiv 0$ according to the first equation (\ref{a4}). 
Hence in consequence of (\ref{a4_1}) and the initial condition $v(0)=w(0)=0$ we successively get $v(t)=w(t)\equiv 0$.
Further, since $u_1(t)\equiv 0$ and $F(u_1(t),u_2(t))\equiv 1$, then $u_2(t)\equiv\frac{s}{F(0,s)},$ $s=\pm 1$.
This, the second equation in  (\ref{a4}), and the initial condition $y(0)=0$ imply that $y(t)=\frac{st}{F(0,s)}$, $s=\pm 1,$ and we get (\ref{anorm1}).
In consequence of (\ref{a2}), the extremal is one of two one-parameter subgroups 
$$g(t)=\exp\left(\frac{stY}{F(0,s)}\right),\quad s=\pm 1,\quad t\in\mathbb{R},$$ 
satisfies (\ref{m}) with $M(t)\equiv 1$  for constant covector function
$$\psi(t)=(0,\varphi_2,0,0,0)=(0,sF(0,s),0,0,0)=(0,h_2(t),0,0,0),\quad s=\pm 1,$$
subject to differential equations (\ref{a6}) and (\ref{a7}); therefore it is normal relative to this covector function and is not strongly abnormal.

Let $\varphi_4=0$, $\varphi_5\neq 0$. Then $y(t)\equiv 0$ and  $u_2(t)\equiv 0$ by the second equation (\ref{a4}). Then from  (\ref{a4}), (\ref{a4_1}) and the initial condition $v(0)=w(0)=0$ we successively get $v(t)=w(t)\equiv 0$.
Also, since $u_2(t)\equiv 0$ and $F(u_1(t),u_2(t))\equiv 1$ then $u_1(t)\equiv\frac{s}{F(s,0)},$ $s=\pm 1$.
This, the first equation in  (\ref{a4}), and the initial condition $x(0)=0$ imply that $x(t)=\frac{st}{F(s,0)}$, $s=\pm 1,$ and we get (\ref{anorm2}). 
In consequence of (\ref{a2}), the extremal is one of two one-parameter subgroups 
$$g(t)=\exp\left(\frac{stX}{F(s,0)}\right),\quad s=\pm 1,\quad t\in\mathbb{R},$$ 
satisfies (\ref{m}) with $M(t)\equiv 1$  for constant covector function
$$\psi(t)=(\varphi_1,0,0,0,0)=(s F(s,0),0,0,0,0)=(h_1(t),0,0,0,0),\quad s=\pm 1,$$
subject to differential equations (\ref{a6}) and (\ref{a7}); therefore it is normal relative to this covector function and is not strongly abnormal.  

Let $\varphi_4\neq 0$ and $\varphi_5\neq 0$. Then $u_2(t)=-\frac{\varphi_4}{\varphi_5}u_1(t)$ on the ground of (\ref{a4}) and the equality $\varphi_4x(t)+\varphi_5y(t)\equiv 0$. Since $F(u_1(t),u_2(t))\equiv 1$ then $u_1(t)\equiv\frac{s\varphi_5}{F(s\varphi_5,-s\varphi_4)}$, $s=\pm 1.$  
This, (\ref{a4}), and the  initial condition $x(0)=z(0)=0$ imply that $x(t)=\frac{s\varphi_5t}{F(s\varphi_5,-s\varphi_4)},$ $z(t)\equiv 0$. 
By substitution the equalities $y(t)=-\frac{\varphi_4}{\varphi_5}x(t)$, $u_2(t)=-\frac{\varphi_4}{\varphi_5}u_1(t),$ and $z(t)\equiv 0$ to the equations (\ref{a4_1}), we get $\dot{v}(t)=\dot{w}(t)\equiv 0$, whence $v(t)=w(t)\equiv 0$ because of $v(0)=w(0)=0$.
In consequence of (\ref{a2}), the extremal is one of two one-parameter subgroups
 $$g(t)=\exp\left(\frac{st(\varphi_5 X-\varphi_4 Y)}{F(s\varphi_5,-s\varphi_4)}\right),\quad s=\pm 1,\quad t\in\mathbb{R},$$ 
satisfies (\ref{m}) with $M(t)\equiv 1$ for constant covector function
 $$\psi(t)=(\varphi_1,\varphi_2,0,0,0)=$$ $$\left(\frac{F(s\varphi_5,-s\varphi_4)}{2\varphi_5},-\frac{F(s\varphi_5,-s\varphi_4)}{2\varphi_4},0,0,0\right)=(h_1(t),h_2(t),0,0,0),s=\pm 1,$$
 subject to differential equations (\ref{a6}) and (\ref{a7}); therefore it is normal relative to this covector function and is not strongly abnormal.
\end{proof}

{\bf 2.} Set $M=1$. 

\begin{theorem}
	\label{mt}	
	For every extremal on the Cartan group starting at the unit, 
	\begin{equation}
	\label{xxt}
	x(t)=\int_0^t\frac{[r^{\prime}(\theta(\tau))\sin\theta(\tau)+r(\theta(\tau))\cos\theta(\tau)]d\tau}{r^2(\theta(\tau))},
	\end{equation} 
	\begin{equation}
	\label{yyt}
	y(t)=\int_0^t\frac{[r(\theta(\tau))\sin\theta(\tau)-r^{\prime}(\theta(\tau))\cos\theta(\tau)]d\tau}{r^2(\theta(\tau))}
	\end{equation}
	with arbitrary measureable integrands of indicated view and continuously differentiable function 
	$\theta=\theta(t),$ satisfying  (\ref{derst}), (\ref{dt}). The functions $z(t),$ $v(t),$ $w(t)$ are defined by formulae 
	(\ref{aa4}), (\ref{aa5}), (\ref{aa6}) or
	\begin{equation}
	\label{zet}
	z(t)=\frac{1}{2}\int_0^t(x\dot{y}-y\dot{x})d\tau,
	\end{equation}
	\begin{equation}
	\label{vw}
	v(t)=\frac{1}{3}\int\limits_0^t(x\dot{z}-z\dot{x})d\tau-\frac{1}{6}x(t)z(t),\quad
	w(t)=\frac{1}{3}\int\limits_0^t(y\dot{z}-z\dot{y})d\tau-\frac{1}{6}y(t)z(t).
	\end{equation}   	
\end{theorem}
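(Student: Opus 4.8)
The plan is to read the velocity $(\dot x,\dot y)=(u_1,u_2)$ off the relations already derived, integrate, and then reduce $z,v,w$ to integral identities proved in Sections~1--2. Since we work in the normal case $M=1$, the maximum condition (\ref{m}) puts $(h_1(t),h_2(t))$ on $\partial U^{\ast}$, so this point is described by the polar angle $\theta=\theta(t)$ as in (\ref{h}), with one-sided derivatives (\ref{hder}). Conceptually, the maximizer $u(t)$ of $h\cdot u$ over $U$ is an outer normal to $U^{\ast}$ at the point $h(\theta)$, normalized by $u\cdot h=1$; this is the geometric reason that $\dot x,\dot y$ must be the displayed functions of $r,r',\theta$. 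Rather than argue geometrically, I would obtain the same conclusion from the established formulas: differentiating (\ref{h}) in $t$ gives $\dot h_1=h'_1(\theta)\dot\theta$ and $\dot h_2=h'_2(\theta)\dot\theta$, while (\ref{a7}) reads $\dot h_1=-h_3u_2$, $\dot h_2=h_3u_1$, and (\ref{h345}), (\ref{derst}) give $h_3=\varphi_3+\varphi_4x+\varphi_5y=r^2(\theta)\dot\theta$. On the set where $\dot\theta\neq0$ these combine to $u_1=h'_2(\theta)/r^2(\theta)$ and $u_2=-h'_1(\theta)/r^2(\theta)$, which by (\ref{hder}) are exactly the integrands of (\ref{xxt}) and (\ref{yyt}); integrating $\dot x=u_1$, $\dot y=u_2$ from (\ref{a4}) with $x(0)=y(0)=0$ produces (\ref{xxt}), (\ref{yyt}).

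Next I would record the regularity of $\theta$ and the remaining coordinates. Because $h_1,h_2$ are absolutely continuous and $(h_1,h_2)$ stays off the origin (as $U$ is bounded, $0$ is interior to $U^{\ast}$ and $r(\theta)$ is bounded away from $0$), the angle $\theta(t)$ is absolutely continuous, and the identity $\dot\theta=h_3/r^2(\theta)=(\varphi_3+\varphi_4x+\varphi_5y)/r^2(\theta)$ from (\ref{derst}) has a continuous right-hand side, so $\theta\in C^1$; equation (\ref{dt}) is then just the square of (\ref{derst}) rewritten via (\ref{h12}), already shown equivalent to the energy integral (\ref{energy}) in the Remark. The formulas for $z,v,w$ need no fresh computation: (\ref{aa4})--(\ref{aa6}) were obtained earlier by integration by parts, the alternative (\ref{zet}) follows by integrating $\dot z=\frac{1}{2}(x\dot y-y\dot x)$ from (\ref{a4}) with $z(0)=0$, and (\ref{vw}) follows by integrating the exact derivatives (\ref{t1}), (\ref{t2}) with $v(0)=w(0)=0$.

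The main obstacle is the careful treatment of the non-smooth and degenerate instants. Where $r(\theta)$ is not differentiable (corners of the polar curve, dual to flat edges of the control region $U$) the maximizer $u(t)$ is not unique, and where $\dot\theta=0$ the division used above is invalid; these are precisely the phenomena that the statement absorbs by permitting ``arbitrary measurable integrands of indicated view'' together with the convention that $r'(\theta)$ denotes any value between the one-sided derivatives. I would check that a measurable selection of the maximizer $u(t)$ always corresponds to such an admissible choice of $r'(\theta)$, so that (\ref{xxt}), (\ref{yyt}) hold for almost every $t$ independently of these exceptional sets. This matching of measurable selections to the convention, rather than any single calculation, is the delicate point of the proof.
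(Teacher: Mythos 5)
Your derivation of the controls $u_1=h'_2(\theta)/r^2(\theta)$, $u_2=-h'_1(\theta)/r^2(\theta)$ from (\ref{h}), (\ref{a7}), (\ref{h345}), (\ref{derst}), followed by integration of (\ref{a4}) and of the exact derivatives (\ref{t1}), (\ref{t2}), is correct and is essentially the paper's route: the paper simply outsources the first part to Theorem~1 of \cite{BerZub}, whose argument is the one you reconstruct, and obtains (\ref{zet}), (\ref{vw}) exactly as you do. Your attention to the corners of $\partial U^{\ast}$ and to the set where $\dot\theta=0$ is precisely what the convention on $r'(\theta)$ and the phrase ``arbitrary measurable integrands of indicated view'' are designed to absorb.

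The one step you omit is the opening move of the paper's proof: the theorem is asserted for \emph{every} extremal, whereas your argument presupposes $M=1$, so that $(h_1,h_2)$ lies on $\partial U^{\ast}$ and the polar angle $\theta$ exists. For an abnormal extremal one has $h_1\equiv h_2\equiv 0$ in the abnormal normalization and no angle $\theta$ is defined; the gap is closed by invoking Proposition~\ref{ageod1}, which shows that every abnormal extremal is also a normal extremal for a suitably chosen covector function, after which your argument applies verbatim. Adding that single citation makes your proof complete.
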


\begin{proof}
By Proposition \ref{ageod1}, every extremal is normal for corresponding control.  
The proof of the first statement  is completed as in the theorem 1 in \cite{BerZub}. The equalities (\ref{zet}), (\ref{vw})
are consequences of (\ref{a4}), (\ref{t1}), (\ref{t2}) and the initial condition $z(0)=v(0)=w(0)=0$.	
\end{proof}

{\bf 2.1.} Let us assume that $\varphi_3=\varphi_4=\varphi_5=0$. The following proposition is true.

\begin{proposition}
	\label{norm1}
	For any extremal on the Cartan group with conditions $\varphi_3=\varphi_4=\varphi_5=0$ and the unit origin, $\theta(t)\equiv\theta_0,$ $t\in\mathbb{R},$ for some $\theta_0.$ In addition, every such extremal is a one-parameter subgroup if and only if there exists the usual derivative $r'(\theta_0).$ In general case, any extremal with conditions 
	$\varphi_3=\varphi_4=\varphi_5=0$ is a metric straight line.
\end{proposition}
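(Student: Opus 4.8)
The plan is to read off all three assertions from two simplifications that the hypotheses $\varphi_3=\varphi_4=\varphi_5=0$ produce: the derivative of the sector area in (\ref{derst}) vanishes, and the integral identity (\ref{eq}) collapses to a single linear equation. First I would establish $\theta\equiv\theta_0$. Substituting $\varphi_3=\varphi_4=\varphi_5=0$ into (\ref{derst}), the middle term $\varphi_3+\varphi_4 x+\varphi_5 y$ is identically zero, so $\dot\sigma(t)=r^2(\theta(t))\dot\theta(t)\equiv 0$. Since $0$ is interior to $U$, it is interior to the polar figure $U^{\ast}$, whence the polar radius satisfies $r(\theta)>0$ for all $\theta$; therefore $\dot\theta(t)\equiv 0$. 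Because $\theta(t)$ is continuously differentiable by Theorem \ref{mt}, this yields $\theta(t)\equiv\theta_0$ for a constant $\theta_0$.

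Next comes the equivalence. With $\theta(\tau)\equiv\theta_0$ the integrands in (\ref{xxt}) and (\ref{yyt}) involve only $r(\theta_0)$ and the chosen value of $r'(\theta_0)$. If the usual derivative $r'(\theta_0)$ exists, these integrands are honest constants, so $x(t)$ and $y(t)$ are linear in $t$, the controls $u_1=\dot x$, $u_2=\dot y$ are constant, and integrating (\ref{a4}), (\ref{a4_1}) (equivalently, applying Proposition \ref{product}) shows the extremal is the one-parameter subgroup $g(t)=\exp\bigl(t(u_1X+u_2Y)\bigr)$. Conversely, if $r'(\theta_0)$ does not exist, the one-sided derivatives differ and, by the convention fixed before (\ref{h}), the symbol $r'(\theta_0)$ ranges over the whole interval between them; this interval is a nondegenerate edge of $U$ on which the maximum in (\ref{m}) is attained, so a non-constant measurable selection of the control is admissible and produces an extremal that is not a one-parameter subgroup. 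Hence every extremal with this $\theta_0$ is a subgroup exactly when $r'(\theta_0)$ exists, which is the asserted equivalence (the word ``every'' is precisely what makes this an equivalence rather than a one-way implication).

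Finally, for the metric straight line claim I would use (\ref{eq}), which under $\varphi_3=\varphi_4=\varphi_5=0$ reduces to $\varphi_1 x(t)+\varphi_2 y(t)=t$, with $(\varphi_1,\varphi_2)=(h_1(0),h_2(0))\in\partial U^{\ast}$ and hence $F_U(\varphi_1,\varphi_2)=1$. The key observation is that $\ell(p):=\varphi_1 x(p)+\varphi_2 y(p)$ is $1$-Lipschitz for $d$: along any admissible trajectory with control $\tilde u\in U$ one has $\frac{d}{ds}\ell=\varphi_1\tilde u_1+\varphi_2\tilde u_2\le F_U(\varphi_1,\varphi_2)=1$, so $\ell(q)-\ell(p)\le d(p,q)$ for all $p,q$. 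For $s\le t$ this gives $t-s=\ell(g(t))-\ell(g(s))\le d(g(s),g(t))$, while the restriction of $g$ to $[s,t]$ is an admissible path of duration $t-s$ giving the reverse inequality; therefore $d(g(s),g(t))=t-s$, i.e. $g$ is a metric straight line.

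The main obstacle is the converse half of the equivalence, where one must certify that non-differentiability of $r$ at $\theta_0$ genuinely forces the presence of non-subgroup extremals. This is exactly the convex-duality statement that the interval between the one-sided derivatives of the polar curve corresponds to a nondegenerate face (edge) of the control set $U$; once this identification, already packaged into the convention for $r'(\theta)$, is accepted, the remaining steps are direct substitutions into the formulas (\ref{derst}), (\ref{xxt})--(\ref{yyt}) and (\ref{eq}) established above.
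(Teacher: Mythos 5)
Your proposal is correct, and its first two parts follow essentially the same route as the paper: $\theta\equiv\theta_0$ is read off from (\ref{derst}) (the paper does not even spell out the positivity of $r$, which you rightly note), and the equivalence with one-parameter subgroups is obtained, as in the paper, by observing via Theorem \ref{mt} that the admissible controls are exactly the measurable selections of the values allowed for $r'(\theta_0)$, so they are forced to be constant precisely when the usual derivative exists (equivalently, when the face of $U$ dual to the corner of $U^{\ast}$ at $\theta_0$ degenerates to a point). Where you genuinely diverge is the metric-straight-line claim. The paper argues geometrically: by (\ref{a4}) the sub-Finsler length of any arc equals the Minkowski length of its projection onto the plane $z=v=w=0$, the projections are metric straight lines there, and the projection is $1$-Lipschitz, so the curves themselves are straight lines. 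You instead run a calibration argument directly in $G$: the functional $\ell=\varphi_1x+\varphi_2y$ is $1$-Lipschitz for $d$ because $F_U(\varphi_1,\varphi_2)=1$, while (\ref{eq}) with $\varphi_3=\varphi_4=\varphi_5=0$ gives $\ell(g(t))=t$, which combined with admissibility of $g|_{[s,t]}$ forces $d(g(s),g(t))=t-s$. Your version is more self-contained — it makes explicit the duality inequality that the paper's ``one can easily see'' hides, and it handles the quasimetric (asymmetric) case cleanly by only comparing $s\le t$ — at the cost of losing the paper's geometric picture that the whole statement reduces to the Minkowski plane. Both arguments are sound.
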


\begin{proof}
	The first statement follows from (\ref{derst}). 	
	
	In addition, by Theorem \ref{mt}, every admissible control $(u_1(t),u_2(t))=(u_1(\theta_0),u_2(\theta_0)),$ with components equal to the integrands in 	(\ref{xxt}), (\ref{yyt}), is constant if and only if there exists the usual derivative $r'(\theta_0),$ what is equivalent to
	condition that the system  (\ref{a4})--(\ref{a4_1}) has unique solution, a one-parameter subgroup
	$$x(t)=u_1(\theta_0)t,\quad y(t)=u_2(\theta_0)t,\quad z(t)=v(t)=w(t)\equiv 0.$$
		
	Notice that there exists at most countable number of values $\theta_0$  for which the second statement is false. 
	For any such $\theta_0,$ $x(t),$ $y(t),$ $t\in\mathbb{R},$ are as in (\ref{xxt}), (\ref{yyt})
	with $\theta(\tau)\equiv \theta_0$ and arbitrary measurable integrands  $u_1(\tau),$ $u_2(\tau)$
	of the type, indicated in Theorem \ref{mt}, and the functions $z(t)$, $v(t)$ and $w(t)$  are defined by formulas (\ref{aa4}), (\ref{aa5}) and
	(\ref{aa6}) respectively.

	It follows from (\ref{a4}) that the length of any arc for the curve $(x(t),y(t),z(t),v(t),w(t))$ in $(G,d)$ is equal to the 
	length of corresponding arc for its projection $(x(t),y(t))$  on the Minkowski plane. $z=v=w=0$. One can easily see that projections 
	of indicated curves are metric straight lines on the Minkowski plane. Therefore the curves itself are metric straight lines.
\end{proof}

\begin{remark}
	The metric straight lines are obtained only in the case of Proposition \ref{norm1}, in particular, Proposition \ref{ageod1}.	
\end{remark}

{\bf 2.2.} Let us consider the case $\varphi_4=\varphi_5=0$, $\varphi_3\neq 0$.

\begin{proposition}
	\label{q2}
	Let $(x,y,z,v,w)(t)$, $t\in\mathbb{R}$, be an extremal with conditions
	$x(0)=y(0)=z(0)=v(0)=w(0)=0$ on the Cartan group such that $\varphi_4=\varphi_5=0$, $\varphi_3\neq 0$. Then the functions $\theta(t),$ $h(t)=(h_1(t),h_2(t)),$ $x(t),$ $y(t)$ are periodic with joint period $L=2S_0/|\varphi_3|,$ where $S_0$ is the area of the figure $U^{\ast}.$ The 
	projection $(x,y)(t)$ of the extremal onto the Minkowski plane $z=v=w=0$ with the quasinorm $F$ has a form
	\begin{equation}
	\label{xy}
	x(t)= \frac{h_2(t)-\varphi_2}{\varphi_3},\quad  y(t)=- \frac{h_1(t)-\varphi_1}{\varphi_3},
	\end{equation} 
	and it is a parametrized by the arc length periodic curve on an isoperimetrix.  In addition,
	$h_1=h_1(\theta(t)),$ $h_2=h_2(\theta(t))$ are given by formulas (\ref{h}), $\theta=\theta(t)$ is the inverse function to the function  $t(\theta)= \int_{\theta_0}^{\theta}(r^2(\xi)/\varphi_3)d\xi,$ and
	$$z(t)=\frac{t-\varphi_1x(t)-\varphi_2y(t)}{2\varphi_3}$$
	is equal to oriented area on the Euclidean plane with the Cartesian coordinates $x$, $y$,  traced by rectilinear segment connecting the origin with the point $(x(\tau),y(\tau))$, $\tau\in [0,t]$. The functions $v(t),$ $w(t)$ are defined by formulas
	(\ref{aa5}), (\ref{aa6}) or (\ref{vw}).
\end{proposition}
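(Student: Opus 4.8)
The plan is to specialize the general formulas of the preceding sections to the subcase $\varphi_4=\varphi_5=0$, $\varphi_3\neq 0$ and read off each assertion in turn. First I would substitute $\varphi_4=\varphi_5=0$ into (\ref{h345}) and (\ref{h12}), obtaining $h_3\equiv\varphi_3$, $h_4=h_5=0$, and the linear relations $h_1=\varphi_1-\varphi_3 y$, $h_2=\varphi_2+\varphi_3 x$. Since $\varphi_3\neq 0$, solving these two equations for $x$ and $y$ yields (\ref{xy}) at once.

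Next I would describe $\theta(t)$ and prove periodicity. From (\ref{derst}) we get $\dot\sigma(t)=\varphi_3$, hence $\dot\theta(t)=\varphi_3/r^2(\theta(t))$, so $\theta$ is strictly monotone (the right-hand side never vanishes, $r$ being positive and bounded as $U^{\ast}$ is a convex body with $0$ in its interior). Separating variables and integrating from $\theta(0)=\theta_0$ gives $t(\theta)=\int_{\theta_0}^{\theta}(r^2(\xi)/\varphi_3)\,d\xi$, whose inverse is $\theta(t)$; continuity of $r$ makes $\theta\in C^1$. For periodicity I would use that $r$ is $2\pi$-periodic, so that the time for $\theta$ to increase by $2\pi$ equals
$$L=\left|\int_{\theta_0}^{\theta_0+2\pi}\frac{r^2(\xi)}{\varphi_3}\,d\xi\right|=\frac{1}{|\varphi_3|}\int_0^{2\pi}r^2(\xi)\,d\xi=\frac{2S_0}{|\varphi_3|},$$
because $S_0=\tfrac12\int_0^{2\pi}r^2\,d\xi$ is the area of $U^{\ast}$. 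Then $\theta(t+L)=\theta(t)+2\pi$, so through (\ref{h}) the covector $h(t)$ is genuinely $L$-periodic, and via (\ref{xy}) so are $x(t),y(t)$; the angular variable $\theta$ is periodic when read modulo $2\pi$.

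The geometric statements about the projection come next. Since $M=1$, the maximum condition forces $(u_1,u_2)=(\dot x,\dot y)\in\partial U$, i.e. $F(\dot x,\dot y)\equiv 1$, so $(x,y)(t)$ is parametrized by Minkowski arc length. By (\ref{xy}) the curve $(x,y)$ is a fixed translate of $\tfrac{1}{\varphi_3}$ times the image of $\partial U^{\ast}$ under the quarter-turn $(h_1,h_2)\mapsto(h_2,-h_1)$; recalling that in a Minkowski plane with unit ball $U$ the isoperimetrix is a dilate of a quarter-turn of the polar figure $U^{\ast}$, this identifies the projection as lying on an isoperimetrix. For the formula for $z$, I would set $\varphi_4=\varphi_5=0$ in the cost integral (\ref{eq}), obtaining $\varphi_1 x+\varphi_2 y+2\varphi_3 z=t$ and hence $z(t)=(t-\varphi_1 x-\varphi_2 y)/(2\varphi_3)$; its meaning as the oriented sector area follows from (\ref{zet}), the standard expression $z=\tfrac12\int_0^t(x\dot y-y\dot x)\,d\tau$. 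Finally $v(t),w(t)$ are given directly by (\ref{aa5}), (\ref{aa6}) (equivalently (\ref{vw})) from Theorem \ref{mt}.

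The main obstacle I anticipate is the isoperimetrix identification: the arc-length property is immediate from $F\equiv 1$, but matching the rotated polar curve $\tfrac{1}{\varphi_3}\rho_{-\pi/2}(\partial U^{\ast})$ with the isoperimetrix requires invoking the Minkowski-geometric characterization of the latter as a dilate of a quarter-turn of $U^{\ast}$, which should be cited rather than reproved (and the orientation of the quarter-turn must be tracked, since $U$ need not be symmetric). Everything else is routine substitution together with the elementary integration in the periodicity computation.
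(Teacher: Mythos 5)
Your proposal is correct and follows essentially the same route as the paper: deriving (\ref{xy}) from the specialized formulas for $h_1,h_2$, obtaining $t(\theta)=\int_{\theta_0}^{\theta}(r^2(\xi)/\varphi_3)\,d\xi$ from (\ref{derst}), computing the period as $\frac{1}{|\varphi_3|}\int_0^{2\pi}r^2\,d\xi=2S_0/|\varphi_3|$ (the paper phrases this as a Kepler-second-law areal-velocity argument, which is the same integral), identifying the projection as a shifted quarter-turn of $U^{\ast}/|\varphi_3|$, i.e.\ an isoperimetrix by the cited Minkowski-geometry fact, and reading off $z$ from (\ref{eq}) and $v,w$ from Theorem \ref{mt}. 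No substantive differences or gaps.
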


\begin{proof}
	The statements on the function $\theta(t)$ follow from (\ref{derst}). 
	It follows from (\ref{area}) and (\ref{h345}) that analogously to the second Kepler law the radius-vector-function  
	$h(\tau)=(h_1(\tau),h_2(\tau))\in U^{\ast},$ $t_1\leq \tau\leq t_2,$ traces in the plane $h_1,h_2$ (or, if it is desired, $u_1,u_2$ or $x,y$) with the standard Euclidean metric the oriented area $(\varphi_3/2)(t_2-t_1).$ Consequently, $h(t),$ $t\in\mathbb{R},$ is a periodic function with period $L=2S_0/|\varphi_3|,$ where $S_0$ is the area of the figure  $U^{\ast}.$ Moreover, (\ref{h345}), (\ref{a7}) and (\ref{a4}) imply
	formulas (\ref{xy}), i.e. the projection $(x,y)(t)$ of the curve $(x,y,z,v,w)(t)$  lies on the boundary $I(\varphi_1,\varphi_2,\varphi_3)$ of
	the figure obtained by rotation of $U^{\ast}/|\varphi_3|$ by the angle $\frac{\pi}{2}$ around the center 
	(origin of coordinates) with subsequent shift by vector $\left(-\frac{\varphi_2}{\varphi_3},\frac{\varphi_1}{\varphi_3}\right)$. 
	Thus, analogously to the case of the Heisenberg group with left-invariant sub-Finsler metric, considered in \cite{Ber2}, $I(\varphi_1,\varphi_2,\varphi_3)$ is an {\it isoperimetrix of the Minkowski plane with the quasinorm $F$} \cite{Lei}.
	
	Analogously to \cite{Ber2}, it follows from (\ref{xy}) that $(x(t),y(t))$  is a periodic curve on  $I(\varphi_1,\varphi_2,\varphi_3)$  with period $L$ indicated above.  It follows from (\ref{eq}) and (\ref{xy}) that
	\begin{equation}
	\label{z} 
	z(t)=\frac{t-\varphi_1x(t)-\varphi_2y(t)}{2\varphi_3}=\frac{1}{2\varphi_3^2}\left(\varphi_3t-\varphi_1h_2(t)+\varphi_2h_1(t)\right),
	\end{equation}
	\begin{equation}
	\label{zl}
	\quad z(L)=\frac{L}{2\varphi_3}=\frac{S_0}{|\varphi_3|\varphi_3}.
	\end{equation}
	The statement of Proposition \ref{q2} on the function $z(t)$ follows from (\ref{a4}). 
	Since $(x(t),y(t))$ lies on the isoperimetrix passing clockwise (counterclockwise) if  $\varphi_3<0$ ($\varphi_3>0$), then $z(t)$
	is a monotone function. In particular, $z(L)$ is the oriented area of the figure spanned by the isoperimetrix  $I(\varphi_1,\varphi_2,\varphi_3)$, or, what is the same, the area of $U^{\ast}/|\varphi_3|$ taken with the sign equal to the sign of  $z(L)$.
	
	The last statement was proved in Theorem \ref{mt}. 
	\end{proof}

{\bf 2.3.} Assume that $\varphi_4^2+\varphi_5^2\neq 0$.

\begin{lemma}
	\label{tconst1}
	If $\varphi_5\neq 0$ and the function $\theta(t)$ is constant on some non-degenerate interval $J\subset \mathbb{R}$,  then on $J$
	\begin{equation}
	\label{f1}
	x(t)=x_0+\frac{\varphi_5}{\mathcal{E}}(t-t_0),\,\,y(t)=-\frac{\varphi_4}{\mathcal{E}}(t-t_0)-\frac{1}{\varphi_5}(\varphi_3+\varphi_4x_0),\,\,z(t)=z_0+\frac{\varphi_3}{2\mathcal{E}}(t-t_0),
    \end{equation}
	\begin{equation}
	\label{f2}
	v(t)=v_0-\frac{\varphi_3\varphi_5}{12\mathcal{E}^2}(t-t_0)^2+\frac{1}{12\mathcal{E}}\left(\varphi_3x_0-6\varphi_5z_0\right)(t-t_0),
	\end{equation}
	\begin{equation}
	\label{f3}
	w(t)=w_0+\frac{\varphi_3\varphi_4}{12\mathcal{E}^2}(t-t_0)^2+\frac{1}{12\varphi_5\mathcal{E}}
	\left(6\varphi_4\varphi_5z_0-3\varphi_3\varphi_4x_0-\varphi_3^2\right)(t-t_0),
	\end{equation}
	where $x_0=x(t_0)$, $z_0=z(t_0)$, $v_0=v(t_0)$, $w_0=w(t_0)$, $t_0$ is a point of the interval $J$ closest to zero, $\mathcal{E}$ is the Casimir function (\ref{c12}). Moreover, $\mathcal{E}\neq 0,$ $F(\varphi_5/\mathcal{E},-\varphi_4/\mathcal{E})=1$, 
	$w_0$ is calculated by $x_0$, $y_0=-(\varphi_3+\varphi_4x_0)/\varphi_5$, $z_0$, $v_0$ and  (\ref{eq}) for $t=t_0$.
	
    In particular, for $\varphi_3=0$, we have $\mathcal{E}=\varphi_1\varphi_5-\varphi_2\varphi_4$ and 
	\begin{equation}
	\label{f01}
	x(t)=x_0+\frac{\varphi_5}{\mathcal{E}}(t-t_0),\quad
	y(t)=-\frac{\varphi_4}{\mathcal{E}}(t-t_0)-\frac{\varphi_4x_0}{\varphi_5},\quad
	z(t)=z_0,
	\end{equation}
    \begin{equation}
    \label{f02}
    v(t)=v_0-\frac{\varphi_5z_0}{2\mathcal{E}}(t-t_0),\quad
    	w(t)=w_0+\frac{\varphi_4z_0}{2\mathcal{E}}(t-t_0). 
    \end{equation}
\end{lemma}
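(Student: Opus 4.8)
The plan is to extract everything from the hypothesis that $\theta$ is constant on $J$ by feeding it into the two differential identities for the oriented area. First I would note that $\theta(t)\equiv\theta_0$ on $J$ gives $\dot\theta\equiv 0$, so (\ref{derst}) forces $\dot\sigma(t)=\varphi_3+\varphi_4x(t)+\varphi_5y(t)\equiv 0$ on $J$; equivalently $h_3\equiv 0$ there and, since $\varphi_5\neq 0$,
$$y(t)=-\frac{1}{\varphi_5}\bigl(\varphi_3+\varphi_4x(t)\bigr)\quad\text{on }J.$$
Differentiating $\dot\sigma\equiv 0$ and invoking (\ref{pen}) then yields $\varphi_4u_1(t)+\varphi_5u_2(t)\equiv 0$ a.e.\ on $J$, i.e.\ $u_2=-(\varphi_4/\varphi_5)u_1$.

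Next I would pin down the controls. Since $h_3\equiv 0$ on $J$, the energy integral (\ref{energy}) collapses to $\mathcal{E}=h_1\varphi_5-h_2\varphi_4$, where $(h_1,h_2)$ is the constant polar value at $\theta_0$. Combined with the maximum condition (\ref{m}) in the form $h_1u_1+h_2u_2=1$ and with $\varphi_4u_1+\varphi_5u_2=0$, this is a linear $2\times2$ system for $(u_1,u_2)$ whose determinant is precisely $\mathcal{E}$. Before solving I would verify $\mathcal{E}\neq 0$: if $\mathcal{E}=0$ then $(h_1,h_2)$ is proportional to $(\varphi_4,\varphi_5)$, whereas $(u_1,u_2)$ is orthogonal to $(\varphi_4,\varphi_5)$, which forces $h_1u_1+h_2u_2=0$ and contradicts $M=1$. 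Solving the system then gives the constant controls $u_1\equiv\varphi_5/\mathcal{E}$, $u_2\equiv-\varphi_4/\mathcal{E}$, and the constraint $F(u_1,u_2)\equiv 1$ becomes $F(\varphi_5/\mathcal{E},-\varphi_4/\mathcal{E})=1$.

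With the controls constant, the rest is integration of (\ref{a4})--(\ref{a4_1}) from the base point $t_0\in J$ nearest to $0$. From (\ref{a4}), $x$ and $y$ are affine in $t-t_0$, reproducing (\ref{f1}) with $y_0=-(\varphi_3+\varphi_4x_0)/\varphi_5$; substituting $\varphi_4x+\varphi_5y\equiv-\varphi_3$ into $\dot z=\tfrac12(xu_2-yu_1)$ collapses it to $\dot z=\varphi_3/(2\mathcal{E})$, giving the affine $z$. For $v$ and $w$ I would insert the affine expressions into the right-hand sides of (\ref{a4_1}); these are a priori quadratic in $t-t_0$, but the relation $\varphi_5y_0=-(\varphi_3+\varphi_4x_0)$ makes the quadratic parts of $\dot v$ and $\dot w$ cancel, leaving affine derivatives whose integration yields the polynomials (\ref{f2}), (\ref{f3}). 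Setting $\varphi_3=0$ specializes these to $\mathcal{E}=\varphi_1\varphi_5-\varphi_2\varphi_4$, $z\equiv z_0$, and (\ref{f01})--(\ref{f02}). Finally $w_0$ is recovered from the first integral (\ref{eq}) at $t=t_0$, solvable for $w$ because the coefficient $3\varphi_5$ is nonzero.

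The routine but delicate part is this last one: the coefficient bookkeeping for $\dot v$ and $\dot w$, where the constraint $\varphi_4x+\varphi_5y\equiv-\varphi_3$ must be used repeatedly to see the quadratic-in-$t$ terms annihilate and to reduce the surviving linear coefficients to the stated combinations of $\varphi_3,\varphi_4,\varphi_5,x_0,z_0$. The only genuinely structural step, as opposed to calculation, is the verification that $\mathcal{E}\neq 0$, since that is exactly what makes the constant controls, and hence all of (\ref{f1})--(\ref{f3}), well defined.
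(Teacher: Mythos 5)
Your proof is correct, and its skeleton matches the paper's: constancy of $\theta$ on $J$ forces $h_3\equiv 0$ there, hence $y$ and $z$ are affine in $x$; the control is then shown to be the constant $(\varphi_5/\mathcal{E},-\varphi_4/\mathcal{E})$ with $\mathcal{E}\neq 0$; the rest is integration. Where you genuinely diverge is the central step that identifies the constant control and proves $\mathcal{E}\neq 0$. The paper first writes $v$ and $w$ as quadratic functions of $x$ on $J$, substitutes everything into the linear first integral (\ref{eq}), and reads off $(\varphi_3^2+2\varphi_1\varphi_5-2\varphi_2\varphi_4)x(t)+\mathrm{const}=2\varphi_5 t$; since $\varphi_5\neq 0$ the coefficient $2\mathcal{E}$ cannot vanish, and differentiating gives $u_1=\varphi_5/\mathcal{E}$. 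You instead combine the maximum condition $h_1u_1+h_2u_2=1$ with the Casimir identity $\mathcal{E}=\varphi_5h_1-\varphi_4h_2$ (valid on $J$ precisely because $h_3\equiv 0$) and the relation $\varphi_4u_1+\varphi_5u_2=0$ obtained from (\ref{pen}), getting a $2\times 2$ linear system whose determinant is $\mathcal{E}$; your contradiction argument for $\mathcal{E}=0$ (the covector would be parallel to $(\varphi_4,\varphi_5)$ while the control is orthogonal to it, killing the Hamiltonian) is clean and correct. Your route is shorter and more conceptual, and it makes $F(\varphi_5/\mathcal{E},-\varphi_4/\mathcal{E})=1$ immediate from the fact that a positive maximum of a linear functional over $U$ is attained on $\partial U$; the paper's route has the side benefit that the explicit restriction of (\ref{eq}) to $J$ is exactly the relation later used to compute $w_0$, which you instead recover separately (correctly, since the coefficient of $w$ in (\ref{eq}) is $3\varphi_5\neq 0$). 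Your claim that the quadratic-in-$t$ terms of $\dot v$ and $\dot w$ cancel under $\varphi_4x+\varphi_5y\equiv-\varphi_3$ checks out and reproduces the stated coefficients.
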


\begin{proof}
If $\varphi_5\neq 0$ and $\theta(t)\equiv\theta_0$ on some non-degenerate interval  $J$, then $\dot{\theta}(t)\equiv 0$ and, in consequence of (\ref{derst}) and (\ref{a4}),  
\begin{equation}
\label{u2}
y(t)=-\frac{1}{\varphi_5}\left(\varphi_3+\varphi_4x(t)\right),\,\,z(t)=\frac{\varphi_3}{2\varphi_5}(x(t)-x_0)+z_0,\quad t\in J.
\end{equation}
It follows from the first equation (\ref{a4_1}) and (\ref{a4}) that
$$\dot{v}(t)=-\frac{\varphi_3}{6\varphi_5}x(t)u_1(t)+\left(\frac{\varphi_3x_0}{4\varphi_5}-\frac{z_0}{2}\right)u_1(t),$$
\begin{equation}
\label{vt}
v(t)=-\frac{\varphi_3}{12\varphi_5}\left(x^2(t)-x_0^2\right)+\left(\frac{\varphi_3x_0}{4\varphi_5}-\frac{z_0}{2}\right)(x(t)-x_0)+v_0,\,\,t\in J.
\end{equation}
The second equation (\ref{a4_1}), (\ref{a4}), and (\ref{u2}) imply that
$$\dot{w}(t)=\frac{\varphi_3\varphi_4}{6\varphi_5^2}x(t)u_1(t)+\left(\frac{\varphi_4 z_0}{2\varphi_5}-
\frac{\varphi_3^2}{12\varphi_5^2}-\frac{\varphi_3\varphi_4}{4\varphi_5^2}x_0\right)u_1(t),$$
\begin{equation}
\label{w0}
w(t)=\frac{\varphi_3\varphi_4}{12\varphi_5^2}\left(x^2(t)-x_0^2\right)+\left(\frac{\varphi_4 z_0}{2\varphi_5}-
\frac{\varphi_3^2}{12\varphi_5^2}-\frac{\varphi_3\varphi_4}{4\varphi_5^2}x_0\right)(x(t)-x_0)+w_0,\,\,t\in J.
\end{equation}

Inserting (\ref{u2}) -- (\ref{w0}) into the equality (\ref{eq}), we obtain
$$(\varphi_3^2+2\varphi_1\varphi_5-2\varphi_2\varphi_4)x(t)+3\varphi_3\varphi_5z_0+6\varphi_4\varphi_5v_0+6\varphi_5^2w_0-\varphi_3^2x_0-2\varphi_2\varphi_3=2\varphi_5t,\,\,t\in J.$$
This, (\ref{c12}), and the first equality in (\ref{a4}) imply that $\mathcal{E}\neq 0$ and
$u_1(t)=\varphi_5/\mathcal{E},$ thus $x(t)=\frac{\varphi_5}{\mathcal{E}}(t-t_0)+x_0$. Inserting the equality into (\ref{u2}) -- (\ref{w0}), we get (\ref{f1}) -- (\ref{f3}). Now it is easy to obtain the remaining statements.
\end{proof}

\begin{corollary}
	\label{rem1}
If $\varphi_5\neq 0$, the function  $\theta(t)$ is constant on some non-degenerate interval $J\subset \mathbb{R},$ and $0\in J$, then on $J,$ $(x,y,z,v,w)(t)$ is an extremal  (\ref{anorm2}) if $\varphi_4=0$, or an extremal (\ref{anorm}) if $\varphi_4\neq 0$.
\end{corollary}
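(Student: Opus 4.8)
The plan is to read off the conclusion from Lemma \ref{tconst1} by specializing its formulas to the case where the constant-$\theta$ interval $J$ contains the origin. First I would note that, since $0\in J$, the point $t_0\in J$ closest to zero is exactly $t_0=0$; as the extremal starts at the unit, this forces $x_0=y_0=z_0=v_0=w_0=0$ in the notation of Lemma \ref{tconst1}. I would then invoke the relation $y_0=-(\varphi_3+\varphi_4x_0)/\varphi_5$ supplied by that lemma: substituting $x_0=0$, $y_0=0$ and using $\varphi_5\neq 0$ yields $\varphi_3=0$. Thus the hypotheses of the special case $\varphi_3=0$ of Lemma \ref{tconst1} hold, and formulas (\ref{f01}), (\ref{f02}) apply with all initial data zero and $t_0=0$.

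Substituting these vanishing initial values into (\ref{f01}), (\ref{f02}) gives at once $z(t)=v(t)=w(t)\equiv 0$ on $J$, together with $x(t)=(\varphi_5/\mathcal{E})t$ and $y(t)=-(\varphi_4/\mathcal{E})t$, where $\mathcal{E}=\varphi_1\varphi_5-\varphi_2\varphi_4\neq 0$ by Lemma \ref{tconst1}. In particular $y(t)=-(\varphi_4/\varphi_5)x(t)$, which is already the shape of the component relations in (\ref{anorm2}) and (\ref{anorm}). It then remains only to normalize the coefficient of $t$ in $x(t)$ so as to match the parametrizations recorded in (\ref{anorm2}) and (\ref{anorm}).

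For this last step I would use the remaining conclusion of Lemma \ref{tconst1}, namely $F(\varphi_5/\mathcal{E},-\varphi_4/\mathcal{E})=1$, together with the positive homogeneity of the quasinorm $F$. Setting $s=\operatorname{sign}\mathcal{E}\in\{\pm1\}$, positive homogeneity converts this identity into $F(s\varphi_5,-s\varphi_4)=s\mathcal{E}$, so that $\varphi_5/\mathcal{E}=s\varphi_5/F(s\varphi_5,-s\varphi_4)$ and hence $x(t)=s\varphi_5 t/F(s\varphi_5,-s\varphi_4)$ with $y(t)=-(\varphi_4/\varphi_5)x(t)$. When $\varphi_4\neq 0$ this is precisely (\ref{anorm}), and $\varphi_4\cdot\varphi_5\neq 0$ holds by hypothesis. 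When $\varphi_4=0$ we have $\mathcal{E}=\varphi_1\varphi_5$, and pulling the scalar $|\varphi_5|$ out of $F$ (again by homogeneity) collapses the expression to $x(t)=s't/F(s',0)$ with $s'=\operatorname{sign}(s\varphi_5)=\pm1$ and $y\equiv 0$, i.e. exactly (\ref{anorm2}).

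The only delicate point is the sign bookkeeping in this final step: since $F$ is merely a quasinorm, it is only positively homogeneous and need not satisfy $F(u)=F(-u)$, so one may not extract an arbitrary scalar from $F$ without tracking its sign. Choosing $s=\operatorname{sign}\mathcal{E}$ so that $s\varphi_5$ has the same sign as $\varphi_5/\mathcal{E}$ is what legitimizes the homogeneity manipulation and reproduces the stated $s=\pm1$ normalizations; every other assertion is a direct substitution into Lemma \ref{tconst1}.
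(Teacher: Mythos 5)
Your proposal is correct and follows essentially the same route as the paper: take $t_0=0$ (so all initial data vanish), deduce $\varphi_3=0$, and substitute into (\ref{f01})--(\ref{f02}) using $\mathcal{E}=sF(s\varphi_5,-s\varphi_4)$ with $s=\operatorname{sgn}\mathcal{E}$. You merely make explicit two points the paper leaves implicit --- deriving $\varphi_3=0$ from the Lemma's relation $y_0=-(\varphi_3+\varphi_4x_0)/\varphi_5$ rather than citing (\ref{derst}) directly, and the positive-homogeneity bookkeeping behind the identity $F(s\varphi_5,-s\varphi_4)=s\mathcal{E}$ --- both of which are sound.
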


\begin{proof}
In this case, it is more convenient to assume that $t_0=0$ under conditions of Lemma \ref{tconst1}. Then $x_0=y_0=z_0=v_0=w_0=0$ and $\varphi_3=0$ on the ground of (\ref{derst}). Inserting these equalities and the equality $\mathcal{E}=sF(s\varphi_5,-s\varphi_4),$ $s={\rm sgn}(\mathcal{E}),$ into (\ref{f01}) and (\ref{f02}), we get the required statement.
\end{proof}

\begin{lemma}
	\label{tconst2}
	If $\varphi_5=0$, $\varphi_4\neq 0$ and the function $\theta(t)$ is constant on some non-degenerate interval $J\subset \mathbb{R}$, then on $J$
\begin{equation}
\label{ccon1}
x(t)\equiv -\frac{\varphi_3}{\varphi_4},\quad y(t)=y_0-\frac{\varphi_4}{\mathcal{E}}(t-t_0),\quad
z(t)=z_0+\frac{\varphi_3}{2\mathcal{E}}(t-t_0),
\end{equation}
\begin{equation}
\label{ccon2}
v(t)=v_0-\frac{\varphi_3^2}{12\varphi_4\mathcal{E}}(t-t_0),\quad 
w(t)=w_0+\frac{\varphi_3\varphi_4}{12\mathcal{E}^2}(t-t_0)^2+\frac{1}{12\mathcal{E}}
\left(\varphi_3y_0+6\varphi_4z_0\right)(t-t_0),
\end{equation}
where $y_0=y(t_0)$, $z_0=z(t_0)$, $v_0=v(t_0),$ $w_0=w(t_0)$, $t_0$ is a point of the interval $J$ closest to zero, $\mathcal{E}=\varphi_3^2/2-\varphi_2\varphi_4$ is the Casimir function (\ref{c12}). Moreover,
 $w_0$ is calculated by  $x_0=-\frac{\varphi_3}{\varphi_4},$ $y_0,$ $z_0$, $v_0$ and (\ref{eq}) for $t=t_0$.
 
In particular, for $\varphi_3=0$ we have $\mathcal{E}=-\varphi_2\varphi_4$ and
\begin{equation}
\label{f03}	
x(t)\equiv 0,\,\,y(t)=y_0-\frac{\varphi_4}{\mathcal{E}}(t-t_0),\,\,z(t)\equiv z_0,\,\,v(t)\equiv v_0,\,\, 
w(t)=w_0+\frac{\varphi_4z_0}{2\mathcal{E}}(t-t_0).	
\end{equation}	
\end{lemma}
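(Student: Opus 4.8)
The plan is to follow the proof of Lemma \ref{tconst1} almost verbatim, with the roles of $\varphi_4$ and $\varphi_5$ interchanged, the key difference being that now the vanishing of $\dot\theta$ freezes the coordinate $x$ rather than $y$. First I would use that $\theta(t)\equiv\theta_0$ on the non-degenerate interval $J$ forces $\dot\theta\equiv 0$ there. By the first equality in (\ref{derst}) together with $\varphi_5=0$ this reads $\varphi_3+\varphi_4x(t)\equiv 0$ on $J$, and since $\varphi_4\neq 0$ it yields $x(t)\equiv-\varphi_3/\varphi_4$, the first formula in (\ref{ccon1}). Differentiating and invoking the first equation of (\ref{a4}) gives the crucial simplification $u_1(t)=\dot x(t)\equiv 0$ on $J$: only the second component of the control is active.

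Next, with $u_1\equiv 0$, I would integrate the surviving equations of (\ref{a4})--(\ref{a4_1}) against $y$, using $\dot y=u_2$. The third equation of (\ref{a4}) gives $\dot z=\tfrac12 x u_2=-\tfrac{\varphi_3}{2\varphi_4}\dot y$, hence $z=z_0-\tfrac{\varphi_3}{2\varphi_4}(y-y_0)$; the first equation of (\ref{a4_1}) gives $\dot v=\tfrac{1}{12}x^2u_2=\tfrac{\varphi_3^2}{12\varphi_4^2}\dot y$, hence $v=v_0+\tfrac{\varphi_3^2}{12\varphi_4^2}(y-y_0)$; and the second equation of (\ref{a4_1}) gives $\dot w=-\tfrac12\bigl(z-\tfrac16 xy\bigr)u_2$, which after substituting the expressions just found for $x$ and $z$ integrates to a quadratic polynomial in $y$. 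These are the exact analogues of the intermediate formulas (\ref{u2})--(\ref{w0}).

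The decisive step, as in Lemma \ref{tconst1}, is to insert these expressions into the conservation law (\ref{eq}) with $\varphi_5=0$. Collecting terms turns the left-hand side into an affine function of $y(t)$; the anticipated arithmetic is that the coefficient of $y$ is exactly $-\mathcal{E}/\varphi_4$, where $\mathcal{E}=\varphi_3^2/2-\varphi_2\varphi_4$ is the value of the Casimir (\ref{c12}) for $\varphi_5=0$. Equating the left-hand side to $t$ and differentiating then forces $-\tfrac{\mathcal{E}}{\varphi_4}\dot y=1$, so $\mathcal{E}\neq 0$ (otherwise the left-hand side would be constant while $t$ varies over the non-degenerate interval $J$) and $u_2=\dot y=-\varphi_4/\mathcal{E}$. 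Substituting $y-y_0=-\tfrac{\varphi_4}{\mathcal{E}}(t-t_0)$ back into the formulas of the previous paragraph converts them into the stated expressions (\ref{ccon1}), (\ref{ccon2}), while the compatibility relation (\ref{eq}) evaluated at $t=t_0$ fixes the initial constants as in Lemma \ref{tconst1}. The special case $\varphi_3=0$, giving (\ref{f03}), then follows by setting $\varphi_3=0$ throughout.

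I expect the only non-routine point to be the coefficient computation in (\ref{eq}): one must check that the contributions of $\varphi_2y$, $2\varphi_3z$, $3\varphi_4v$ and $\tfrac{\varphi_4}{2}xz$ combine precisely into the slope $-\mathcal{E}/\varphi_4$, since it is this single identity that simultaneously delivers $\mathcal{E}\neq 0$ and the constant speed $u_2=-\varphi_4/\mathcal{E}$. Everything else is integration of linear ordinary differential equations and bookkeeping of the constants of integration, strictly paralleling the treatment of the case $\varphi_5\neq 0$ in Lemma \ref{tconst1}.
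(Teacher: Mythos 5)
Your proposal is correct and follows essentially the same route as the paper: extract $x\equiv-\varphi_3/\varphi_4$ (hence $u_1\equiv 0$) from (\ref{derst}), integrate the remaining equations of (\ref{a4})--(\ref{a4_1}) as affine/quadratic functions of $y$, and substitute into (\ref{eq}) to find that the $y$-coefficient is $-\mathcal{E}/\varphi_4$, which forces $\mathcal{E}\neq 0$ and $u_2=-\varphi_4/\mathcal{E}$. The coefficient computation you flag as the only delicate point indeed checks out ($\varphi_2-\varphi_3^2/\varphi_4+\varphi_3^2/(4\varphi_4)+\varphi_3^2/(4\varphi_4)=-\mathcal{E}/\varphi_4$), matching the paper's displayed identity.
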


\begin{proof}
If $\varphi_5=0$, $\varphi_4\neq 0$ and $\theta(t)\equiv\theta_0$ on some non-degenerate interval $J$, then 
$\dot{\theta}(t)\equiv 0$ and, due to (\ref{derst}) and (\ref{a4}),  
\begin{equation}
\label{uu2}
x(t)=-\frac{\varphi_3}{\varphi_4},\quad z(t)=-\frac{\varphi_3}{2\varphi_4}(y(t)-y_0)+z_0,\quad t\in J.
\end{equation}
It follows from the first equation (\ref{a4_1}) and (\ref{a4}) that
$$\dot{v}(t)=\frac{\varphi_3^2}{12\varphi_4^2}u_2(t),\quad \dot{w}(t)=\frac{\varphi_3}{6\varphi_4}y(t)u_2(t)-\left(\frac{z_0}{2}+\frac{\varphi_3y_0}{4\varphi_4}\right)u_2(t),$$
hence for $t\in J,$
\begin{equation}
\label{wv}	
v(t)=\frac{\varphi_3^2}{12\varphi_4^2}(y(t)-y_0)+v_0,\,\, w(t)=\frac{\varphi_3}{12\varphi_4}\left(y^2(t)-y_0^2\right)-\left(\frac{z_0}{2}+\frac{\varphi_3y_0}{4\varphi_4}\right)\left(y(t)-y_0\right)+w_0.
\end{equation}

Inserting the equalities (\ref{uu2}), (\ref{wv}) into (\ref{eq}), we obtain
$$-(\varphi_3^2-2\varphi_2\varphi_4)y(t)-2\varphi_1\varphi_3+6\varphi_4^2v_0+\varphi_3^2y_0+3\varphi_3\varphi_4z_0=2\varphi_4t.$$
This, (\ref{c12}), and the second equation in (\ref{a4}) imply that $\mathcal{E}\neq 0$ and $u_2(t)=-\varphi_4/\mathcal{E}$, thus
$y(t)=y_0-\frac{\varphi_4}{\mathcal{E}}(t-t_0)$. Inserting the equality into (\ref{uu2}) and (\ref{wv}), we get the third equality (\ref{ccon1}) and (\ref{ccon2}). Now it is easy to obtain the remaining statements. 
\end{proof}

\begin{corollary}
	\label{rem2}
	If $\varphi_5=0$, $\varphi_4\neq 0$, the function $\theta(t)$ is constant on some non-degenerate interval  $J\subset \mathbb{R},$ and $0\in J$, then the trajectory	$(x,y,z,v,w)(t)$ on the interval $J$ is the extremal  (\ref{anorm1}).
\end{corollary}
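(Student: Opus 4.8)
The plan is to specialize Lemma~\ref{tconst2} to the sub-case $0\in J$, in exact parallel with the proof of Corollary~\ref{rem1}. Since $0$ lies in the interval $J$ on which $\theta(t)$ is constant, I would take as base point $t_0=0$, which is admissible in Lemma~\ref{tconst2}. The initial conditions $x(0)=y(0)=z(0)=v(0)=w(0)=0$ for an extremal starting at the unit then give $x_0=y_0=z_0=v_0=w_0=0$. To pin down the remaining parameter I would first show $\varphi_3=0$: constancy of $\theta$ on $J$ means $\dot\theta\equiv 0$, so the first equality in (\ref{derst}) reads $\dot\sigma(t)=\varphi_3+\varphi_4x(t)+\varphi_5y(t)\equiv 0$, which with $\varphi_5=0$ becomes $\varphi_3+\varphi_4x(t)\equiv 0$; evaluating at $t=0$ and using $x(0)=0$ forces $\varphi_3=0$ (equivalently, the constant value $-\varphi_3/\varphi_4$ of $x$ in (\ref{ccon1}) must equal $x_0=0$).

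With $\varphi_3=0$, $t_0=0$, and all of $x_0,y_0,z_0,v_0,w_0$ vanishing, the special case (\ref{f03}) of Lemma~\ref{tconst2} collapses to $x(t)\equiv 0$, $z(t)\equiv 0$, $v(t)\equiv 0$, $w(t)\equiv 0$, and $y(t)=-\tfrac{\varphi_4}{\mathcal{E}}\,t$ with $\mathcal{E}=-\varphi_2\varphi_4$. It then remains only to identify the constant $-\varphi_4/\mathcal{E}$ with $s/F(0,s)$. On $J$ the control is $(u_1,u_2)=(\dot x,\dot y)=(0,-\varphi_4/\mathcal{E})$, and since the extremal is normal with $M=1$ this control lies on $\partial U$, i.e. $F(0,-\varphi_4/\mathcal{E})=1$. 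Writing $s={\rm sgn}(-\varphi_4/\mathcal{E})\in\{\pm 1\}$ and using positive homogeneity of the quasinorm $F$, I obtain $-\varphi_4/\mathcal{E}=s/F(0,s)$, so that $y(t)=st/F(0,s)$. Together with the vanishing of the other coordinates this is exactly (\ref{anorm1}).

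The argument is essentially mechanical once $\varphi_3=0$ is established, so the main obstacle is not depth but care with signs in the last step: because $F$ is only a quasinorm, the values $F(0,1)$ and $F(0,-1)$ need not coincide, so the sign $s={\rm sgn}(-\varphi_4/\mathcal{E})$ must be tracked explicitly when applying homogeneity, exactly as the sign $s={\rm sgn}(\mathcal{E})$ is tracked in the proof of Corollary~\ref{rem1}.
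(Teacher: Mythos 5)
Your proposal is correct and follows essentially the same route as the paper: set $t_0=0$ so all initial values vanish, deduce $\varphi_3=0$ from (\ref{derst}), and specialize (\ref{f03}) of Lemma~\ref{tconst2}, with the final sign bookkeeping (your $s={\rm sgn}(-\varphi_4/\mathcal{E})$ versus the paper's $\mathcal{E}=sF(0,-s\varphi_4)$, $s={\rm sgn}(\mathcal{E})$) being an equivalent reformulation of the same use of $F(0,-\varphi_4/\mathcal{E})=1$ and positive homogeneity.
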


\begin{proof}
In this case, it is more convenient to assume that $t_0=0$  under conditions of Lemma \ref{tconst2}. Then $x_0=y_0=z_0=v_0=w_0=0$ and $\varphi_3=0$ on the ground of (\ref{derst}). Inserting these equalities and the equality $\mathcal{E}=sF(0,-s\varphi_4),$ $s={\rm sgn}(\mathcal{E}),$ into (\ref{f03}), we get the required statement.
\end{proof}

Using (\ref{h}), the equality (\ref{dt}) can be rewritten as
\begin{equation}
\label{ttt}
r^4(\theta)\dot{\theta}^2=\varphi_3^2+2\sqrt{\varphi_4^2+\varphi_5^2}\left(r(\theta)\sin(\theta+\theta^{\ast})-
r(\theta_0)\sin(\theta_0+\theta^{\ast})\right),
\end{equation}
where
\begin{equation}
\label{tild}
\cos\theta^{\ast}=\frac{\varphi_4}{\sqrt{\varphi_4^2+\varphi_5^2}},\quad
\sin\theta^{\ast}=-\frac{\varphi_5}{\sqrt{\varphi_4^2+\varphi_5^2}}.
\end{equation}

Let $\tilde{F}$ be a quasinorm on $D(e)$, defined by the equality
$$\tilde{F}(u_1,u_2)=F(u_1\cos\theta^{\ast}+u_2\sin\theta^{\ast},-u_1\sin\theta^{\ast}+u_2\cos\theta^{\ast}),\,\,(u_1,u_2)\in D(e),$$
$\tilde{U}=\{u\in D(e)|\tilde{F}(u)\leq 1\}$ is the unit ball of the quasinorm $\tilde{F}$. It follows from the definitions of $\tilde{F}$  and its support Minkowski function that the curve $F_{\tilde{U}}(x,y)=1$, that is the polar boundary for the body $\tilde{U}$, is obtained from the curve $F_U(x,y)=1$ with the rotation by angle $\theta^{\ast}$ around the origin. Then $\tilde{r}(\theta)=r(\theta-\theta^{\ast})$, $\theta\in\mathbb{R}$, is a polar equation of the curve $F_{\tilde{U}}(x,y)=1$. Set $\tilde{\theta}(t)=\theta(t)+\theta^{\ast}$ and $\tilde{\theta}_0=\tilde{\theta}(0)=\theta_0+\theta^{\ast}$.
Then the equation (\ref{ttt}) can be rewritten as
\begin{equation}
\label{dts}
\left(\dot{\tilde{\theta}}\right)^{2}=\frac{\varphi_3^2+2\sqrt{\varphi_4^2+\varphi_5^2}\left(\tilde{r}(\tilde{\theta})\sin(\tilde{\theta})-\tilde{r}(\tilde{\theta}_0)
	\sin(\tilde{\theta}_0)\right)}{\tilde{r}^4(\tilde{\theta})}.
\end{equation}	

\begin{theorem}
	\label{mt1}	
	If $\varphi_4^2+\varphi_5^2\neq 0$ then any extremal on the Cartan group starting at the unit is defined by the equations
	(\ref{xxt}), (\ref{yyt}) (with arbitrary measureable integrands of indicated view and continuously differentiable function $\theta=\theta(t)$  satisfying  (\ref{derst}), (\ref{dt})).
	
	Moreover, if $\varphi_5\neq 0$ then
	\begin{equation}
	\label{zt}
	z(t)=-\frac{1}{\varphi_5}\left(\varphi_2+\left(\varphi_3+\frac{1}{2}\varphi_4x(t)+\frac{1}{2}\varphi_5y(t)\right)x(t)-
	r(\theta(t))\sin\theta(t)\right),
	\end{equation} 
	the function $v(t)$ is given by the second formula (\ref{vw}), and
	\begin{equation}
	\label{wt}
	w(t)=\frac{1}{3\varphi_5}\left(t-\varphi_1x(t)-\varphi_2y(t)-2\varphi_3z(t)-3\varphi_4v(t)-\frac{\varphi_4}{2}x(t)z(t)-
	\frac{\varphi_5}{2}y(t)z(t)\right).
	\end{equation}
	
	If $\varphi_5=0$ and $\varphi_4\neq 0$ then
	$$z(t)=\frac{1}{\varphi_4}\left(\varphi_1-\left(\varphi_3+\frac{1}{2}\varphi_4x(t)\right)y(t)-r(\theta(t))
	\cos\theta(t)\right),$$
	$$v(t)=\frac{1}{3\varphi_4}\left(t-\varphi_1x(t)-\varphi_2y(t)-2\varphi_3z(t)-\frac{\varphi_4}{2}x(t)z(t)\right)$$
	and the function $w(t)$ is given by the second formula (\ref{vw}).
	
	Let us set
	$$\theta_0:=\theta(0),\quad \mathcal{E}_0=\max_{h\in U^{\ast}}(\varphi_5h_1-\varphi_4h_2),\quad 
	\mathcal{E}_{-1}=\min_{h\in U^{\ast}}(\varphi_5h_1-\varphi_4h_2).$$ 
	The following cases are possible.
	
	1.  Let  $\varphi_3\neq 0$ и $\mathcal{E}>\mathcal{E}_0$. Then the function  $\theta(t),$ $t\in\mathbb{R}$, is inverse to the function $t(\theta)$, defined by formula
	\begin{equation}
	\label{tth} t(\theta)=\int_{\theta_0}^{\theta}\frac{r^2(\xi)d\xi}{\varphi_3\sqrt{1+(2\varphi_4/\varphi_3^2)(r(\xi)\sin\xi-\varphi_2)-
    (2\varphi_5/\varphi_3^2)(r(\xi)\cos\xi-\varphi_1)}}, 
	\end{equation}
	where
	\begin{equation}
	\label{f12}
	r(\theta_0)\cos\theta_0=\varphi_1,\quad r(\theta_0)\sin\theta_0=\varphi_2.
\end{equation}
	
	2. Let  $\varphi_3=0$ and $\mathcal{E}=\mathcal{E}_{-1}$. Then $\theta(t)\equiv\theta_0$  and the desired extremal is the metric straight line  (\ref{anorm1}), (\ref{anorm2}) or (\ref{anorm}).
	
	3. Let $\mathcal{E}_{-1}< \mathcal{E}<\mathcal{E}_0$. Then we have for some numbers $t_1$, $t_2$, $t_1<t_2$, for any $t\in\mathbb{R}$ and $k\in\mathbb{Z}$  
	\begin{equation}
	\label{dts00}
	\theta(t+2k(t_2-t_1))=\theta(t),\quad\dot{\theta}(t_i+t)=-\dot{\theta}(t_i-t),\,\,\theta(t_i+t)=\theta(t_i-t),\,\,i=1,2.
	\end{equation}
	
	3.1. If  $\varphi_3\neq 0$ then $t_i=t(\theta_i)$, $i=1,2$, in equalities (\ref{dts00}) are calculated by (\ref{tth}), where $\theta_1\neq\theta_2$ are the nearest to  $\theta_0$ values such that the right-hand side in (\ref{dt})  vanishes and $\varphi_3(\theta_2-\theta_1)>0$.
	
	3.2. If $\varphi_3=0$ then $\theta_2\neq\theta_1=\theta_0$, $t_1=0$, $t_2=t(\theta_2)$ in equalities (\ref{dts00}), where
	\begin{equation}
	\label{tth0}
	t(\theta)=\pm\int\limits_{\theta_0}^{\theta}
	\frac{r^2(\xi)d\xi}{\sqrt{2\varphi_4( r(\xi)\sin{\xi}-\varphi_2)-2\varphi_5( r(\xi)\cos{\xi}-\varphi_1)}},
	\end{equation}
	on the right--hand side stands $+$ (respectively, $-$) if $\theta_2>\theta_0$ ($\theta_2<\theta_0$) and (\ref{f12}) holds. 	
	Here $\theta_2\neq\theta_0$ is a number such that  
	$\varphi_4\left(h_2(\theta)-h_2(\theta_0)\right)\geq\varphi_5\left(h_1(\theta)-h_1(\theta_0)\right)$
	for any $\theta$ from interval $I=\left(\min(\theta_0,\theta_1),\max(\theta_0,\theta_1)\right)$, 
	and the equality holds only for $\theta=\theta_0$ and $\theta=\theta_2$.
	
	4. Let  $\varphi_3\neq 0$ and $\mathcal{E}=\mathcal{E}_0$. Then there exist the nearest to  $\theta_0$ values
	$\theta_1$, $\theta_2$ such that $\theta_1<\theta_0<\theta_2$ and the right-hand side in (\ref{dt}) vanishes for $\theta=\theta_i,$ $i=1,2.$
	
	If improper integrals (\ref{tth}) diverge for $\theta=\theta_1$ and $\theta=\theta_2$, then  $\theta(t)\in (\theta_1,\theta_2),$ $t\in \mathbb{R},$ is the inverse function to the function $t(\theta)$ defined by (\ref{tth}). 
	
	If  improper integral (\ref{tth}) is finite for $\theta=\theta_i$, $i\in\{1,2\}$, then  the function $\theta(t)$ is not unique and can take constant values equal to $\theta_1+2\pi k$ for some $k\in\mathbb{Z}$ (and with an arbitrary alternation of increase's and decrease's intervals) if $i=1,2$ and $\theta_2=\theta_1+ 2\pi,$ and equal to
	$\theta_i$ in other cases, on some non-degenerate closed intervals of arbitrary length, on which 
	(\ref{f1}) -- (\ref{f3})  are valid if $\varphi_5\neq 0$ or (\ref{ccon1}), (\ref{ccon2}) are valid if $\varphi_5=0$.
			
	5. Let  $\varphi_3=0$ and $\mathcal{E}=\mathcal{E}_{0}$. Then there exists the largest segment
	$[\theta_1,\theta_2]$, $\theta_1\leq\theta_2$,  such that $\theta_0\in [\theta_1,\theta_2]$ and
	$\varphi_5(h_1(\theta)-\varphi_1)=\varphi_4(h_2(\theta)-\varphi_2)$ 
	for any $\theta\in [\theta_1,\theta_2]$. If $\theta_0=\theta_2$ (respectively, $\theta_0=\theta_1$) we will assume that $t(\theta)$ is an improper integral (\ref{tth0}) for $\theta\in[\theta_0,\theta_1+2\pi]$ (respectively, $\theta\in[\theta_1-2\pi,\theta_0]$) without $+$ and $-$.
	
	Then $\theta(t)\equiv\theta_0$ and the desired extremal is the metric straight line (\ref{anorm1}), (\ref{anorm2}) or (\ref{anorm}) in the following cases:
	
	5.1. $\theta_1=\theta_0=\theta_2$ and $t(\theta)=\infty$ for $\theta\nearrow\theta_0$ and for $\theta\searrow\theta_0$;
	
	5.2. $\theta_1<\theta_0<\theta_2$;
	
	5.3. $\theta_0=\theta_1<\theta_2$ and $t(\theta)=\infty$ for $\theta\nearrow\theta_0$;
	
	5.4. $\theta_0=\theta_2>\theta_1$ and $t(\theta)=\infty$ for $\theta\searrow\theta_0$.
	
	In all other cases, the function $\theta(t)$ is not unique and can take constant values on some closed intervals of arbitrary length, on which (\ref{f01}), (\ref{f02}) are valid if $\varphi_5\neq 0$, or (\ref{f03}) is valid if $\varphi_5=0$. These constant values may be equal to 1) $\theta_0$ and $\theta_1+2\pi$ if  $\theta_1<\theta_0=\theta_2$, $t(\theta)$ is finite for $\theta\searrow\theta_0,$ and $t(\theta_1+2\pi)$ is finite; 2) $\theta_0$ and $\theta_2-2\pi$ if  $\theta_0=\theta_1<\theta_2$, $t(\theta)$ is finite for $\theta\nearrow\theta_0,$ and $t(\theta_2-2\pi)$ is finite; 3) $\theta_0+2\pi k$ for some $k\in\mathbb{Z}$ (and with an arbitrary alternation of increase's and decrease's intervals) if $\theta_0=\theta_1=\theta_2$,  $t(\theta)$ is finite for $\theta\nearrow\theta_0$ and for $\theta\searrow\theta_0$; 4) $\theta_0$ in all other cases.
	\end{theorem}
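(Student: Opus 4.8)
The plan is to reduce everything to a single autonomous first-order equation for $\theta$, after first disposing of the coordinate formulas. The representation (\ref{xxt}), (\ref{yyt}) of $x,y$ is already given by Theorem \ref{mt}. For the other coordinates I would simply solve the algebraic relations of the Hamiltonian system: when $\varphi_5\neq 0$, combining the second relation in (\ref{h12}) with $h_2=r(\theta)\sin\theta$ from (\ref{h}) gives (\ref{zt}), and when $\varphi_5=0$, $\varphi_4\neq 0$ the first relation in (\ref{h12}) with $h_1=r(\theta)\cos\theta$ gives the stated $z(t)$. One of $v,w$ is then read off from (\ref{vw}), and the complementary function is obtained by solving the scalar identity (\ref{eq}) for $w$ (if $\varphi_5\neq 0$) or for $v$ (if $\varphi_5=0$), producing (\ref{wt}) and its analogue. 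This part is routine algebra.

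The essential content is the description of $\theta(t)$. The starting point is to rewrite (\ref{dt}) through the energy integral (\ref{energy}): since $\dot\sigma=h_3$ and $\mathcal{E}=\tfrac12\dot\sigma^2+\varphi_5h_1-\varphi_4h_2$, putting $g(\theta):=\varphi_5h_1(\theta)-\varphi_4h_2(\theta)=r(\theta)(\varphi_5\cos\theta-\varphi_4\sin\theta)$ turns (\ref{dt}) (equivalently (\ref{ttt}), (\ref{dts})) into
\[ r^4(\theta)\,\dot\theta^2=\dot\sigma^2=2\bigl(\mathcal{E}-g(\theta)\bigr). \]
This is a one-dimensional conservative system with potential $g$ taken along $\partial U^{\ast}$ and energy $\mathcal{E}$. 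Since $g$ is linear in $h$ and $0$ is interior to $U^{\ast}$, its range is $[\mathcal{E}_{-1},\mathcal{E}_0]$ with $\mathcal{E}_{-1}<0<\mathcal{E}_0$; evaluating at $t=0$ gives the key identity $\mathcal{E}-g(\theta_0)=\tfrac12\varphi_3^2$. Hence $\mathcal{E}\geq g(\theta_0)\geq\mathcal{E}_{-1}$ always, the condition $\varphi_3=0$ means precisely that $\theta_0$ is a turning point ($\mathcal{E}=g(\theta_0)$), and $\mathcal{E}>\mathcal{E}_0$ forces $\varphi_3\neq 0$; this is exactly the bookkeeping behind the five cases. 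The sign of $\dot\theta$ equals that of the continuous function $\dot\sigma$, which is $\varphi_3$ at $t=0$ and changes only at zeros of $\mathcal{E}-g$; separating variables where $\dot\theta\neq 0$ gives $dt=r^2\,d\theta/\dot\sigma$, and substituting $\dot\sigma=\mathrm{sgn}(\varphi_3)\sqrt{2(\mathcal{E}-g)}$ recovers (\ref{tth}) when $\varphi_3\neq 0$ and (\ref{tth0}) when $\varphi_3=0$.

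Each case is then a standard phase analysis of this equation. If $\mathcal{E}>\mathcal{E}_0$ (case 1) the right-hand side never vanishes, $\theta$ is strictly monotone, and $\theta(t)$ is the inverse of (\ref{tth}). If $\mathcal{E}_{-1}<\mathcal{E}<\mathcal{E}_0$ (case 3) the motion librates between two consecutive simple turning points $\theta_1<\theta_2$; autonomy of the equation yields time-reversal symmetry about each turning instant, which is exactly the reflection and periodicity (\ref{dts00}) with half-period $t_2-t_1$, and sub-cases 3.1, 3.2 only record whether $\theta_0$ is interior ($\varphi_3\neq 0$) or itself a turning point ($\varphi_3=0$). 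The lower boundary $\mathcal{E}=\mathcal{E}_{-1}$ (case 2) forces $\theta_0$ to be a global minimum of $g$, so the admissible set $\{\mathcal{E}\geq g\}$ shrinks to $\theta\equiv\theta_0$; Lemma \ref{tconst1} or \ref{tconst2} with $\varphi_3=0$, via Corollaries \ref{rem1}, \ref{rem2}, then identifies the extremal with the abnormal straight line (\ref{anorm1})--(\ref{anorm}).

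The delicate part, and the main obstacle, is the separatrix $\mathcal{E}=\mathcal{E}_0$ (cases 4 and 5), where the turning points are maxima of $g$ and $\dot\theta$ vanishes to higher order. There the improper integral (\ref{tth}) / (\ref{tth0}) may diverge --- the maximum is then only approached asymptotically and $\theta(t)$ remains a genuine monotone inverse on the open interval --- or converge, in which case the maximum is reached in finite time. At a reached maximum the (non-Lipschitz) equation for $\theta$ loses uniqueness: $\theta$ may stay constant for an arbitrary time, governed there by Lemma \ref{tconst1} (formulas (\ref{f1})--(\ref{f3}), or (\ref{f01}), (\ref{f02}) if $\varphi_3=0$) or Lemma \ref{tconst2} (formulas (\ref{ccon1}), (\ref{ccon2}), or (\ref{f03})), and then resume. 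The work is therefore to (i) extract from the geometry of $U^{\ast}$ the maximal arc $[\theta_1,\theta_2]$ on which $g$ attains its maximum --- a nondegenerate segment precisely when the supporting functional $\varphi_5h_1-\varphi_4h_2$ is constant along an edge of $U^{\ast}$ --- (ii) decide convergence of the integral at each end, and (iii) enumerate the admissible concatenations of monotone arcs with constant pieces, which is exactly the list in case 4 and in 5.1--5.4 together with the four closing possibilities. The straight-line sub-cases are again closed off through Corollaries \ref{rem1}, \ref{rem2}.
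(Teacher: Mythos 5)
Your strategy coincides with the paper's up to the final step. The coordinate formulas are obtained exactly as in the paper ($x,y$ from Theorem \ref{mt}; $z$ by solving (\ref{h12}) together with (\ref{h}); the remaining one of $v,w$ from (\ref{eq})); note that if you actually carry out the $\varphi_5\neq 0$ computation you get $z=\varphi_5^{-1}\left(\varphi_2+\left(\varphi_3+\tfrac12\varphi_4x+\tfrac12\varphi_5y\right)x-h_2\right)$, so (\ref{zt}) as printed carries a spurious overall minus sign and your route yields the corrected version. Your reformulation $r^4\dot\theta^2=2(\mathcal{E}-g(\theta))$ with $g=\varphi_5h_1-\varphi_4h_2$ and the identity $\mathcal{E}-g(\theta_0)=\varphi_3^2/2$ is exactly the paper's (\ref{dts}) written invariantly; the paper instead rotates the polar curve by the angle $\theta^{\ast}$ of (\ref{tild}), so that the pair $(\varphi_4,\varphi_5)$ collapses to the single parameter $\sqrt{\varphi_4^2+\varphi_5^2}$, and then imports the entire phase analysis from Theorem 2 of the Engel-group paper \cite{BerZub} through a substitution dictionary, combined with Lemmas \ref{tconst1} and \ref{tconst2}. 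What you propose to prove directly is precisely the content of that imported theorem, and your outline of it is sound: monotone inversion for $\mathcal{E}>\mathcal{E}_0$, reflection and periodicity between turning points, and the convergence dichotomy with loss of uniqueness on the separatrix resolved by the constancy lemmas. Two caveats. First, you should not leave implicit why the solution cannot stall at a turning point in case 3: a constant-$\theta$ interval forces, by Lemma \ref{tconst1} or \ref{tconst2}, the control $(\varphi_5,-\varphi_4)/\mathcal{E}$, and its admissibility $(\varphi_5,-\varphi_4)/\mathcal{E}\in U$ is equivalent to $\mathcal{E}\geq\mathcal{E}_0$ or $\mathcal{E}=\mathcal{E}_{-1}$; this is what makes the turning points ``simple'' and justifies (\ref{dts00}). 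Second, the items you defer in (i)--(iii) --- convergence of the improper integrals at the ends of the maximal arc $[\theta_1,\theta_2]$ and the enumeration of admissible concatenations in cases 4 and 5.1--5.4 --- are the entire substance of the borrowed result, so a complete write-up must carry them out rather than list them.
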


\begin{proof}
	The first statement of this Theorem follows from Theorem \ref{mt}. Moreover, formulae for $z(t)$ are consequences of equalities (\ref{h12}) and (\ref{h}),
	formulae for $w(t)$ in the case $\varphi_5\neq 0$ and for $v(t)$ in the case $\varphi_5=0$ follow directly from (\ref{eq}).
	Formulae (\ref{tth}) and (\ref{tth0}) follow from the equality (\ref{dt}), which can be written in the form (\ref{dts}).
		
	All other statements of Theorem \ref{mt1} follow from Lemmas \ref{tconst1} and \ref{tconst2} of our paper and from Theorem 2 and its proof in paper 	\cite{BerZub} for the following replacements in the last theorem:
	$$\varphi_1\,\Rightarrow\,\frac{\varphi_1\varphi_4+\varphi_2\varphi_5}{\sqrt{\varphi_4^2+\varphi_5^2}},\quad
	\varphi_2\,\Rightarrow\,\frac{\varphi_2\varphi_4-\varphi_1\varphi_5}{\sqrt{\varphi_4^2+\varphi_5^2}},\quad
	\varphi_3\,\Rightarrow\,\varphi_3,\quad\varphi_4\,\Rightarrow\,\sqrt{\varphi_4^2+\varphi_5^2},$$
	$$h_1(\theta)\,\Rightarrow\,\frac{\varphi_4h_1(\theta)+\varphi_5h_2(\theta)}{\sqrt{\varphi_4^2+\varphi_5^2}},\quad h_2(\theta)\,\Rightarrow\,\frac{\varphi_4h_2(\theta)-\varphi_5h_1(\theta)}{\sqrt{\varphi_4^2+\varphi_5^2}},
	\quad\mathcal{E}\,\Rightarrow\,\mathcal{E}.$$	
\end{proof}

\end{document}